\providecommand{\U}[1]{\protect \rule{.1in}{.1in}}
\newtheorem{theorem}{Theorem}[section]
\newtheorem{example}[theorem]{Example}
\newtheorem{lemma}[theorem]{Lemma}
\newtheorem{proposition}[theorem]{Proposition}
\newtheorem{remark}[theorem]{Remark}
\newenvironment{proof}[1][Proof]{\noindent \textbf{#1.} }{\  \rule{0.5em}{0.5em}}
\numberwithin{equation}{section}
\begin{document}
	
	\title{Infinite Anticipation Backward Stochastic Differential Equations}
	\author{Guanwei Cheng\thanks{School of Mathematics, Shandong University, PR China, (guanwei.cheng@mail.sdu.edu.cn).}\quad Shuzhen Yang\thanks{Shandong University-Zhong Tai Securities Institute for Financial Studies, Shandong University, PR China, (yangsz@sdu.edu.cn).}
		\thanks{This work was supported by National Key R\&D program of China (Grant No.2023YFA1009203), National Natural Science Foundation of China (Grant No.12471450), and Taishan Scholar Talent Project Youth Project.}}
	\date{July 2025}
	\maketitle
	
	\textbf{Abstract}:  In this paper, we introduce a new type of backward stochastic differential equations (BSDEs) with infinite anticipation, where the generator depends on the entire future values of the solution in infinite horizon. We show that the new BSDEs has a unique solution and admits a comparison result. In the end, we solve a stochastic control problem via a duality between BSDEs with infinite anticipation and stochastic differential equations (SDEs) with infinite delay.\\
	\textbf{Keywords}:
	{\textbf{MSC2010}: Anticipated BSDEs; Infinite anticipation; Duality
		
		\addcontentsline{toc}{section}{\hspace*{1.8em}Abstract}
		
		\section{Introduction}
		In the 1970s,  linear Backward stochastic differential equations (BSDEs) was first introduced by Bismut \cite{bismut} for optimal control problems, and have since become a cornerstone of mathematical finance and stochastic analysis. The fundamental advancement in non-linear BSDEs theory emerged in Peng and Pardoux \cite{peng90}, where the generator depends on current values of the solutions. However, the practical problems always involve delays or anticipations in decision-making which motivate the following work. Peng and Yang \cite{peng09AP} introduced a new type of BSDEs with finite anticipation:
		\begin{equation}\label{peng09equation}
			\left\{\begin{aligned}
				Y_t & =\xi_T + \int_{t}^{T}f\left(s, Y_s, Z_s, Y_{s+\delta(s)}, Z_{s+\zeta(s)}\right) d s-\int_{t}^{T}Z_s d W_s, & & t \in[0, T] ; \\
				Y_t & =\xi_t, & & t \in[T, T+K] ; \\
				Z_t & =\eta_t, & & t \in[T, T+K],
			\end{aligned}\right.
		\end{equation}
		where generator explicitly depends on future values of the solution $\left(Y_{s+\delta(s)}, Z_{s+\zeta(s)}\right)$. Based on Lipschitz conditions of $f$ on variables, Peng and Yang \cite{peng09AP} proved
		that equation (\ref{peng09equation}) has a unique solution and established the related comparison results. Furthermore, they resolved an optimal control problem via a novel duality between stochastic differential delay equations (SDDEs) and anticipated BSDEs.
		
		Subsequent research extended the framework of Peng and Yang \cite{peng09AP} in many directions.
		Some works studied anticipated BSDEs driven by Poisson jumps (\cite{drivenPoissonjumps2,drivenPoissonjumps}), fractional Brownian motion (\cite{driven2fractionalBM, drivenfractionalBM2}) and other processes (\cite{drivencontinous,drivensinglejumps,drivenmarkovchain}),
		while other literature focus on non-Lipschitz (\cite{leftlip,nonlip1}) or quadratic growth
		(\cite{quadraticgrowthwithjumps,quadraticgrowth1}) coefficients.
		For more types of anticipated BSDEs, we refer to doubly (\cite{2ABSDEwithnonlip,2ABSDE2}), reflected (\cite{withreflection2,withreflection1}), mean-field (\cite{MFdrivenfractionalBM,MFdrivenLevynoises}) and Volterra integral (\cite{Volterraintegralwithjump,Volterraintegral2}) anticipated BSDEs.
		
		{Moreover, several studies examined finitely-anticipated BSDEs in infinite-time horizons as the adjoint equation in the stochastic control problems (see \cite{infinitehorizon4,infinitehorizon1,infinitehorizon3}). Since their generators remain constrained to finite anticipation windows, typically depending on future values within a bounded interval $[s, s+\theta]$, we thus classify these equations as "BSDEs with finite anticipation".}	A parallel framework of anticipated BSDEs, called "generalized anticipated BSDEs", was introduced in Yang \cite{yangzhe07GABSDE}, and was later generalized in Yang and Elliot \cite{yangzhe13ECP}, as follows:
		$$
		\left\{\begin{aligned}
			Y_t=&\xi_T + \int_t^{T} f\left(s, \{Y_r\}_{r \in [s,T+K]}, \{Z_r\}_{r \in [s,T+K]} \right) d s- \int_t^{T} Z_s d W_s, \quad &&t \in[0, T]; \\
			Y_t = & \xi_t, \quad && t\in[T, T+K]; \\
			Z_t = & \eta_t, \quad && t\in[T, T+K],
		\end{aligned}\right.
		$$
		where the generator depends on the entire future value of the solutions, within bounded horizons $[s,T+K]$.
		
		In this paper, we consider the extension of "generalized anticipated BSDEs" to the \textbf{ infinite anticipation BSDEs (IABSDEs)}:
		$$
		\left\{\begin{aligned}
			Y_t=&\xi_T + \int_t^{T} f\left(s, \{Y_r\}_{r \in [s,+\infty)}, \{Z_r\}_{r \in [s,+\infty)} \right) d s- \int_t^{T} Z_s d W_s, \quad &&t \in[0, T]; \\
			Y_t = & \xi_t, \quad && t\in[T, +\infty); \\
			Z_t = & \eta_t, \quad && t\in[T, +\infty).
		\end{aligned}\right.
		$$
		Here, the generator depends on unbounded future trajectory of the solutions, enabling full anticipation over $[s,+\infty)$. Under mild assumptions on $f$, we establish the well-posedness for IABSDEs and based on which solves a stochastic optimal control problem.
		
		The main contributions of this paper are threefold. First, we establish the well-posedness of IABSDEs under a weaker Lipschitz condition (see Section \ref{chapter condition compare}) that accommodates dependencies on whole unbounded future trajectory, generalizing prior frameworks \cite{yangzhe13ECP}. Second, we extend  comparison results in finite-horizon (see \cite{peng09AP,yangzhe13ECP}) to the infinite anticipation setting without additional technical constraints. Finally, we establish a duality between IABSDEs and SDEs with infinite delay (ISDDEs), a class of memory processes with diverse physical and economic applications such as species growth or incubating time on disease models (see \cite{ISDDEapplication1, ISDDEapplication2}). This duality provides a novel tool to investigate more types of control problems governed by ISDDEs, as empirically demonstrated in climate-economic systems where permanent damages depend on historical cumulative carbon emission (see Section \ref{chapter stochastic control problem}).
		
		The remainder of this paper is organized as follows. Section \ref{chapter preliminary} presents some classical results of BSDEs. In Section \ref{chapter formulation}, we introduce the formulation of IABSDEs, and examine the priori estimate of the solutions. In Section \ref{chapter uniqueness and existence}, we prove the existence and uniqueness theorem for the IABSDEs, and establish the comparison results in Section \ref{chapter comparison}. Finally in Section \ref{chapter Stochastic control problem}, based on the new IABSDEs, we solve a stochastic control problem by a duality property.
		
		\section{Preliminary}\label{chapter preliminary}
		\indent
		Let $\left(\Omega, \mathcal{F}, P, \{\mathcal{F}_t\}_{t \geqslant 0}\right)$ be a complete probability space such that $\mathcal{F}_0$ contains all $P$ null elements of $\mathcal{F}$ and suppose that the filtration is generated by a $d$-dimensional standard Brownian motion $W=\left(W_t\right)_{t \geqslant 0}$. Let $T>0$, and for all $n \in \mathbb{N}$, Euclidean norm in $\mathbb{R}^d$ is denoted by $|\cdot|$.
		
		We introduce the following notations for stochastic processes in finite time:
		\begin{itemize}
			\item $L^2\left(\mathcal{F}_t ; \mathbb{R}^d\right)=\left\{\mathbb{R}^d\right.$-valued $\mathcal{F}_t$-measurable random variables such that $\left.E\left[|\xi|^2\right]<\infty\right\}$;
			\item $\mathcal{M}_{\mathcal{F}}^2\left(0, T ; \mathbb{R}^d\right)=\left\{\mathbb{R}^d\right.$-valued and $\mathcal{F}_t$-adapted stochastic processes such that\\ $\left.E\left[\int_0^T\left|\varphi_t\right|^2 d t\right]<\infty\right\} ;$
			\item $\mathcal{S}_{\mathcal{F}}^2\left(0, T; \mathbb{R}^d\right)=\left\{\right.$continuous stochastic processes in $\mathcal{M}_{\mathcal{F}}^2\left(0, T ; \mathbb{R}^d\right)$ such that\\ $\left.E\left[\sup _{0 \leqslant t \leqslant T}\left|\varphi_t\right|^2\right]<\infty\right\}.$
		\end{itemize}
		
		For the stochastic processes with infinite time, we denote, for some given constant $\beta$: 
		\begin{itemize}
			\item $\mathcal{M}_{\mathcal{F}}^{2,\beta}\left(0, +\infty ; \mathbb{R}^d\right)=\left\{\mathbb{R}^d\right.$-valued and $\mathcal{F}_t$-adapted stochastic processes such that\\ $\left.E\left[\int_0^{+\infty}e^{\beta t}\left|\varphi_t\right|^2 d t\right]<\infty\right\} ;$
			\item $\mathcal{S}_{\mathcal{F}}^2\left(0, +\infty ; \mathbb{R}^d\right)=\left\{\right.$continuous stochastic processes in $\mathcal{M}_{\mathcal{F}}^2\left(0, +\infty ; \mathbb{R}^d\right)$ such that\\ $\left.E\left[\sup _{0 \leqslant t \textless +\infty}\left|\varphi_t\right|^2\right]<\infty \right\}.$
		\end{itemize}
		
		Note that all the spaces mentioned above are Banach spaces. If $d=1$, we further denote $L^2\left(\mathcal{F}_t; \mathbb{R}\right)$, $\mathcal{M}_{\mathcal{F}}^2(0, T ; \mathbb{R})$, $\mathcal{S}_{\mathcal{F}}^2(0, T ; \mathbb{R})$, $\mathcal{S}_{\mathcal{F}}^2(0, +\infty ; \mathbb{R})$, by $L^2\left(\mathcal{F}_t\right)$, $\mathcal{M}_{\mathcal{F}}^2(0, T)$, $\mathcal{S}_{\mathcal{F}}^2(0, T)$, $\mathcal{S}_{\mathcal{F}}^2(0, +\infty)$, respectively.

		We now review some classical results on BSDEs that are used in this study. Lemma \ref{classic BSDE uniqueness and exsitence} is an existence and uniqueness result for BSDEs that is given in Theorem 3.2 of Peng \cite{Peng2004}. Lemma \ref{classic strict BSDE comparison} is given in Theorem 3.3 of Peng \cite{Peng2004}, which is a comparison result for solutions of BSDEs, and also see  El Karoui, Peng and Quenez \cite{BSDEinfinance}. Lemma \ref{classic BSDE comparison} is a direct corollary of Lemma \ref{classic strict BSDE comparison}.
		
		We consider the following conditions for $g=g(\omega, t, y, z): \Omega \times[0, T] \times \mathbb{R}^d \times \mathbb{R}^{d \times m} \rightarrow \mathbb{R}^d$ :
		
		\textbf{(a)} $g(\cdot, y, z)$ is an $\mathbb{R}^d$-valued and $\mathcal{F}_t$-adapted process satisfying Lipschitz condition in $(y, z)$, i.e., there exists $\rho>0$ such that for each $y, y^{\prime} \in \mathbb{R}^d$ and $z, z^{\prime} \in \mathbb{R}^{d \times m}$,
		$$
		\left|g(t, y, z)-g\left(t, y^{\prime}, z^{\prime}\right)\right| \leqslant \rho\left(\left|y-y^{\prime}\right|+\left|z-z^{\prime}\right|\right) .
		$$
		
		\textbf{(b)} $g(\cdot, 0,0) \in \mathcal{M}_{\mathcal{F}}^2\left(0, T ; \mathbb{R}^d\right)$.
		
		\begin{lemma}\label{classic BSDE uniqueness and exsitence}
			Let $g$ satisfy (a) and (b). Then for any given terminal condition $\xi \in L^2\left(\mathcal{F}_T ; \mathbb{R}^d\right)$, the BSDE
			\begin{equation}\label{classical BSDE}
				Y_t=\xi+\int_t^T g\left(s, Y_s, Z_s\right) d s-\int_t^T Z_s d W_s, \quad 0 \leqslant t \leqslant T
			\end{equation}
			has a unique solution, i.e., there exists a unique pair of $\mathcal{F}_{\text {t}}$-adapted processes $\left(Y., Z.\right)$ $\in S_{\mathcal{F}}^2\left(0, T; \mathbb{R}^d\right) \times L_{\mathcal{F}}^2\left(0, T ; \mathbb{R}^{d \times m}\right)$ satisfying equation (\ref{classical BSDE}).
		\end{lemma}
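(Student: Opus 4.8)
The plan is a fixed-point (Picard) argument on a suitable Banach space with an exponentially weighted norm. Work in $\mathcal{H} := \mathcal{M}_{\mathcal{F}}^2(0,T;\mathbb{R}^d) \times \mathcal{M}_{\mathcal{F}}^2(0,T;\mathbb{R}^{d\times m})$, but equip it with the equivalent norm
$$\|(y,z)\|_\beta^2 := E\left[\int_0^T e^{\beta s}\bigl(|y_s|^2 + |z_s|^2\bigr)\,ds\right],$$
where $\beta>0$ will be fixed later. Define a map $\Gamma\colon \mathcal{H}\to\mathcal{H}$ as follows: given $(y,z)\in\mathcal{H}$, the Lipschitz condition (a) together with (b) shows $s\mapsto g(s,y_s,z_s)$ lies in $\mathcal{M}_{\mathcal{F}}^2(0,T;\mathbb{R}^d)$ (via the linear-growth bound $|g(s,y,z)|\le |g(s,0,0)|+\rho(|y|+|z|)$); hence the martingale $M_t := E\bigl[\xi+\int_0^T g(s,y_s,z_s)\,ds\,\big|\,\mathcal{F}_t\bigr]$ is square-integrable, and the martingale representation theorem gives a unique $Z\in\mathcal{M}_{\mathcal{F}}^2(0,T;\mathbb{R}^{d\times m})$ with $M_t = M_0 + \int_0^t Z_s\,dW_s$. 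Setting $Y_t := M_t - \int_0^t g(s,y_s,z_s)\,ds$, one checks that $(Y,Z)$ solves the linear BSDE with frozen coefficient and that $Y$ is continuous (both $M_t$ and the Lebesgue integral are continuous in $t$); put $\Gamma(y,z):=(Y,Z)$. A fixed point of $\Gamma$ is exactly a solution of (\ref{classical BSDE}).

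Next I would verify that $\Gamma$ actually maps into $\mathcal{S}_{\mathcal{F}}^2(0,T;\mathbb{R}^d)\times\mathcal{M}_{\mathcal{F}}^2(0,T;\mathbb{R}^{d\times m})\subset\mathcal{H}$. Applying It\^o's formula to $|Y_t|^2$ (with a localization argument to justify that the stochastic integral is a true martingale), taking expectations, and combining with the Burkholder--Davis--Gundy inequality yields an a priori bound
$$E\Bigl[\sup_{0\le t\le T}|Y_t|^2\Bigr] + E\left[\int_0^T |Z_s|^2\,ds\right] \le C\left(E\bigl[|\xi|^2\bigr] + E\left[\int_0^T |g(s,y_s,z_s)|^2\,ds\right]\right),$$
and the right-hand side is finite by the previous step, so $\Gamma$ is well defined.

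For the contraction estimate, take $(y^1,z^1),(y^2,z^2)\in\mathcal{H}$, write $(Y^i,Z^i)=\Gamma(y^i,z^i)$, and set $\hat Y=Y^1-Y^2$, $\hat Z=Z^1-Z^2$, $\hat y=y^1-y^2$, $\hat z=z^1-z^2$. Applying It\^o's formula to $e^{\beta s}|\hat Y_s|^2$ on $[0,T]$, using $\hat Y_T=0$, taking expectations, and bounding the cross term with Young's inequality $2ab\le \varepsilon a^2+\varepsilon^{-1}b^2$ and the Lipschitz bound $|g(s,y^1_s,z^1_s)-g(s,y^2_s,z^2_s)|\le \rho(|\hat y_s|+|\hat z_s|)$, one arrives at an inequality of the form
$$E\left[\int_0^T e^{\beta s}\bigl(|\hat Y_s|^2+|\hat Z_s|^2\bigr)\,ds\right] \le \frac{C(\rho)}{\beta}\, E\left[\int_0^T e^{\beta s}\bigl(|\hat y_s|^2+|\hat z_s|^2\bigr)\,ds\right],$$
with $C(\rho)$ depending only on $\rho$. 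Choosing $\beta>C(\rho)$ makes $\Gamma$ a contraction on $(\mathcal{H},\|\cdot\|_\beta)$, and the Banach fixed-point theorem produces a unique fixed point $(Y,Z)$, which by the second step lies in $\mathcal{S}_{\mathcal{F}}^2(0,T;\mathbb{R}^d)\times\mathcal{M}_{\mathcal{F}}^2(0,T;\mathbb{R}^{d\times m})$ and is the unique solution of (\ref{classical BSDE}). The main obstacle is arranging the constants in the It\^o computation so that the prefactor on the right is genuinely $C(\rho)/\beta$: one must carefully absorb the $\hat Z$-difference term into the left-hand side with room to spare and track how the cross terms scale, so that the single parameter $\beta$ controls the contraction. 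The remaining steps are routine.
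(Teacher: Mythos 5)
Your proof is correct: the paper does not prove this lemma at all (it is quoted as a classical result from Theorem 3.2 of Peng's notes \cite{Peng2004}), and your contraction-mapping argument on the $\beta$-weighted space, with the map built from the martingale representation theorem and the a priori estimate via It\^o plus Burkholder--Davis--Gundy, is exactly the standard Pardoux--Peng proof of that cited result. The only minor point is that the contraction prefactor naturally comes out as $C(\rho,T)/\beta$ rather than $C(\rho)/\beta$ (unless you absorb part of the $\hat Z$-term more carefully), which is harmless since $T$ is fixed and $\beta$ can still be chosen large.
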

		
		\begin{lemma}\label{classic strict BSDE comparison}
			Let $g_j(\omega, t, y, z): \Omega \times[0, T] \times \mathbb{R} \times \mathbb{R}^m \rightarrow \mathbb{R}$ satisfy $(a)$ and $(b), j=1,2$. Let $\left(Y_{.}^{(1)}, Z_{.}^{(1)}\right)$ and $\left(Y_{.}^{(2)}, Z_{.} ^{(2)}\right)$ be, respectively, the solutions of BSDEs as follows:
			$$
			Y_t^{(j)}=\xi^{(j)}+\int_t^T g_j\left(s, Y_s^{(j)}, Z_s^{(j)}\right) d s - \int_t^T Z_s^{(j)} d W_s, \quad 0 \leqslant t \leqslant T
			$$
			where $j=1,2$. If $\xi^{(1)} \geqslant \xi^{(2)}$ and $g_1\left(t, Y_t^{(1)}, Z_t^{(1)}\right) \geqslant g_2\left(t, Y_t^{(1)}, Z_t^{(1)}\right)$, a.e., a.s., then
			$$
			Y_t^{(1)} \geqslant Y_t^{(2)}, \quad \text { a.e., a.s. }
			$$
			We also have the strict comparison result: under the above conditions,
			$$
			\begin{aligned}
				& Y_0^{(1)}=Y_0^{(2)} \Longleftrightarrow \xi^{(1)}=\xi^{(2)}, \quad \text { a.s., } \\
				& g_1\left(t, Y_t^{(1)}, Z_t^{(1)}\right)=g_2\left(t, Y_t^{(1)}, Z_t^{(1)}\right), \quad \text { a.e., a.s. }
			\end{aligned}
			$$
		\end{lemma}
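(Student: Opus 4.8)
\noindent The plan is to linearise the difference of the two solutions and then read off an explicit Feynman--Kac-type representation from which both the comparison and its rigidity follow. I would set $\hat Y := Y^{(1)}-Y^{(2)}$, $\hat Z := Z^{(1)}-Z^{(2)}$ and $\hat\xi := \xi^{(1)}-\xi^{(2)}\ge 0$; by Lemma \ref{classic BSDE uniqueness and exsitence} both pairs exist and lie in $\mathcal S^2_{\mathcal F}(0,T)\times\mathcal M^2_{\mathcal F}(0,T;\mathbb R^m)$. Subtracting the two equations,
$$\hat Y_t = \hat\xi + \int_t^T\big[g_1(s,Y^{(1)}_s,Z^{(1)}_s)-g_2(s,Y^{(2)}_s,Z^{(2)}_s)\big]\,ds - \int_t^T\hat Z_s\,dW_s .$$
The first step is to split the driver increment as $g_1(s,Y^{(1)}_s,Z^{(1)}_s)-g_2(s,Y^{(2)}_s,Z^{(2)}_s) = \varphi_s + a_s\hat Y_s + b_s\hat Z_s$, where $\varphi_s := g_1(s,Y^{(1)}_s,Z^{(1)}_s)-g_2(s,Y^{(1)}_s,Z^{(1)}_s)\ge 0$ by hypothesis, and $a_s,b_s$ are obtained by writing $g_2(s,Y^{(1)}_s,Z^{(1)}_s)-g_2(s,Y^{(2)}_s,Z^{(2)}_s)$ as a telescoping sum — first across the $y$-slot, then component by component across the $m$ entries of the $z$-slot — and dividing each increment by the matching component of $\hat Y$ or $\hat Z$ (with $0/0:=0$). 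Condition (a) makes $a$ a bounded $\mathcal F_t$-adapted scalar process and $b$ a bounded $\mathcal F_t$-adapted $\mathbb R^{1\times m}$-valued process ($|a_s|\le\rho$, $|b_s|\le\sqrt m\,\rho$), while condition (b), the Lipschitz bound and $(Y^{(1)},Z^{(1)})\in\mathcal S^2_{\mathcal F}\times\mathcal M^2_{\mathcal F}$ put $\varphi$ in $\mathcal M^2_{\mathcal F}(0,T)$. Hence $\hat Y$ solves the linear BSDE with terminal value $\hat\xi$ and driver $a_s\hat Y_s+b_s\hat Z_s+\varphi_s$.

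Next I would introduce the positive adjoint weight $\Gamma$ defined by $d\Gamma_s = \Gamma_s\big(a_s\,ds + b_s\,dW_s\big)$, $\Gamma_0=1$; boundedness of $a,b$ gives $\Gamma_s>0$ and $\sup_{0\le s\le T}\Gamma_s\in\bigcap_{p\ge 1}L^p(\Omega)$. Applying It\^o's formula to $s\mapsto\Gamma_s\hat Y_s$ on $[t,T]$, the terms $a_s\hat Y_s$ and $b_s\hat Z_s$ cancel against the drift and the cross-variation $d\langle\Gamma,\hat Y\rangle_s$ produced by $\Gamma$, leaving
$$\Gamma_t\hat Y_t = \Gamma_T\hat\xi + \int_t^T\Gamma_s\varphi_s\,ds - \int_t^T\Gamma_s\big(\hat Z_s + \hat Y_s b_s\big)\,dW_s .$$
Taking $E[\,\cdot\mid\mathcal F_t]$ and dividing by the $\mathcal F_t$-measurable quantity $\Gamma_t>0$ yields
$$\hat Y_t = E\!\left[\Gamma_t^{-1}\Gamma_T\hat\xi + \Gamma_t^{-1}\!\int_t^T\Gamma_s\varphi_s\,ds \ \Big|\ \mathcal F_t\right].$$
Since $\Gamma_s>0$, $\hat\xi\ge 0$ and $\varphi_s\ge 0$, the conditioned random variable is nonnegative, so $\hat Y_t\ge 0$, i.e.\ $Y^{(1)}_t\ge Y^{(2)}_t$ a.e., a.s.

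Finally, for the rigidity statement I would evaluate the representation at $t=0$, where $\Gamma_0=1$ and (the filtration being Brownian) $\hat Y_0$ is deterministic: $\hat Y_0 = E[\Gamma_T\hat\xi] + E\big[\int_0^T\Gamma_s\varphi_s\,ds\big]$, a sum of two nonnegative numbers. Thus $Y^{(1)}_0=Y^{(2)}_0$ holds iff both summands vanish, i.e.\ $\Gamma_T\hat\xi=0$ a.s.\ and $\Gamma_s\varphi_s=0$ for $dt\otimes dP$-a.e.\ $(s,\omega)$; since $\Gamma>0$ this is equivalent to $\hat\xi=0$ a.s.\ (that is $\xi^{(1)}=\xi^{(2)}$) together with $\varphi_s=0$ a.e., a.s.\ (that is $g_1(s,Y^{(1)}_s,Z^{(1)}_s)=g_2(s,Y^{(1)}_s,Z^{(1)}_s)$), and the converse implication is immediate since $\hat\xi=0$, $\varphi\equiv 0$ force $\hat Y\equiv 0$ in the representation. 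The one step that needs genuine care — and where I expect the main technical work to sit — is the justification that $\int_\cdot\Gamma_s(\hat Z_s+\hat Y_s b_s)\,dW_s$ is a true martingale rather than merely a local one, so that the conditional expectation is legitimate; this follows from $\sup_s\Gamma_s\in L^p$ for every $p$, $\hat Z\in\mathcal M^2_{\mathcal F}$, $\hat Y\in\mathcal S^2_{\mathcal F}$, H\"older's inequality and the Burkholder--Davis--Gundy inequality together with a standard stopping-time localisation; the rest is routine It\^o calculus and bookkeeping with the sign of $\Gamma$.
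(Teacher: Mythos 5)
Your proof is correct and is essentially the canonical argument: the paper does not prove this lemma at all but quotes it from Peng (2004) and El Karoui--Peng--Quenez, and your linearization of the driver difference as $\varphi_s + a_s\hat Y_s + b_s\hat Z_s$ with the positive adjoint weight $\Gamma$ and the resulting nonnegative conditional representation is exactly the classical proof behind that citation --- indeed the same device (bounded difference-quotient coefficient, exponential weight $X$, closed formula for $\tilde Y_t$) is what the paper itself redeploys in the proof of Theorem \ref{strict comparision}. The one technical point you single out, that $\int\Gamma_s(\hat Z_s+\hat Y_s b_s)\,dW_s$ is a true martingale, is the right thing to worry about and your justification via $\sup_s\Gamma_s\in L^p$, H\"older and Burkholder--Davis--Gundy is adequate.
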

		
		\begin{lemma}\label{classic BSDE comparison}
			Let $g_j(\omega, t, y, z): \Omega \times[0, T] \times \mathbb{R} \times \mathbb{R}^m \rightarrow \mathbb{R}$ satisfy $(a)$ and $(b), j=1,2$. If $\xi^{(1)} \geqslant \xi^{(2)}$ and $g_1(t, y, z) \geqslant g_2(t, y, z), t \in[0, T], y \in \mathbb{R}, z \in \mathbb{R}^m$, then
			$$
			Y_t^{(1)} \geqslant Y_t^{(2)}, \quad \text { a.e., a.s. }
			$$
			
		\end{lemma}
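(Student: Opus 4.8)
The plan is to deduce this statement directly from Lemma~\ref{classic strict BSDE comparison} by verifying that its hypotheses are satisfied under the present, slightly stronger, assumptions. Both generators $g_1,g_2$ satisfy (a) and (b) by hypothesis, and the terminal-data inequality $\xi^{(1)}\geqslant\xi^{(2)}$ is given. The only point that requires a word is the pathwise comparison of the generators evaluated along the first solution, namely $g_1\!\left(t,Y_t^{(1)},Z_t^{(1)}\right)\geqslant g_2\!\left(t,Y_t^{(1)},Z_t^{(1)}\right)$ for a.e.\ $t\in[0,T]$, a.s. This is immediate from the hypothesis that $g_1(t,y,z)\geqslant g_2(t,y,z)$ holds for \emph{every} deterministic triple $(t,y,z)\in[0,T]\times\mathbb{R}\times\mathbb{R}^m$: one simply substitutes the $\mathcal{F}_t$-adapted, square-integrable pair $(Y_t^{(1)},Z_t^{(1)})$ for $(y,z)$, which is legitimate precisely because the inequality is assumed uniformly in the spatial variables and needs no further measurability or integrability check.

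Once these hypotheses are in place, Lemma~\ref{classic strict BSDE comparison} applies verbatim and yields $Y_t^{(1)}\geqslant Y_t^{(2)}$ a.e., a.s., which is the assertion. I anticipate no real obstacle here — this is the routine ``corollary'' reduction — and in particular no linearization, Gronwall estimate, or approximation argument is needed; the single subtlety is the harmless passage from the pointwise (deterministic-argument) ordering of $g_1,g_2$ to the ordering along the random trajectory $(Y^{(1)},Z^{(1)})$.
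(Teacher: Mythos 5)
Your proposal is correct and matches the paper's intent exactly: the paper itself states Lemma~\ref{classic BSDE comparison} as a direct corollary of Lemma~\ref{classic strict BSDE comparison}, obtained precisely by substituting the trajectory $\left(Y_t^{(1)}, Z_t^{(1)}\right)$ into the pointwise ordering $g_1 \geqslant g_2$ to verify the hypothesis of the stronger lemma. No further comment is needed.
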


		\section{The Formulation and Priori Estimate}\label{chapter formulation}
		\subsection{The Formulation}
		We first introduce IABSDEs  as follows:
		\begin{equation} \label{ABSDE}
			\left\{\begin{aligned}
				Y_t=&\xi_T + \int_t^{T} f\left(s, \{Y_r\}_{r \in [s,+\infty)}, \{Z_r\}_{r \in [s,+\infty)} \right) d s- \int_t^{T} Z_s d W_s, \quad &&t \in[0, T]; \\
				Y_t = & \xi_t, \quad && t\in[T, +\infty); \\
				Z_t = & \eta_t, \quad && t\in[T, +\infty).
			\end{aligned}\right.
		\end{equation}
		Here, the generator $f$ at time $s$ depends on the future values of the solutions $\{Y_r\}_{r \in [s,+\infty)}$ and $\{Z_r\}_{r \in [s,+\infty)}$. Our aim is to find a pair of $\mathcal{F}_t$-adapted processes $(Y., Z.) \in \mathcal{S}_\mathcal{F}^2\left(0, +\infty ; \mathbb{R}^d\right) \times \mathcal{M}_\mathcal{F}^{2,\beta}\left(0, +\infty; \mathbb{R}^{d \times m}\right)$ satisfying the IABSDEs (\ref{ABSDE}) $P-a.s.$ 	
		
		To study the existence and uniqueness of the solutions of IABSDEs (\ref{ABSDE}) and further properties,
		we assume that for all $t \in[0, T], f(t, \omega, y_{.}, z_{.}): \Omega \times \mathcal{M}_\mathcal{F}^2\left(t, +\infty ; \mathbb{R}^d\right) \times \mathcal{M}_\mathcal{F}^{2,\beta}\left(t, +\infty; \mathbb{R}^{d \times m}\right) \rightarrow L^2\left(\mathcal{F}_t; \mathbb{R}^d\right)$, and $f$ satisfies the following conditions:
		
		\textbf{(H1)} (uniform Lipschitz condition) There exists a constant $L>0$, such that for all $t \in[0, T], y_{.}, y_{.}^{\prime} \in \mathcal{M}_\mathcal{F}^2\left(t, +\infty; \mathbb{R}^d\right), z_{.}, z_{.}^{\prime} \in \mathcal{M}_\mathcal{F}^{2,\beta}\left(t, +\infty; \mathbb{R}^{d \times m}\right)$, it follows that
		$$
		E\left[\int_{t}^{T}\left|f(s,y_{.},z_{.})-f(s,y_{.}^{\prime},z_{.}^{\prime}) \right|^2 e^{\beta s} ds\right]
		\leqslant LE\left[\int_{t}^{+\infty}\left(\left\|y_s - y_s^{\prime}\right\|^2 + \left|z_s - z_s^{\prime}\right|^2\right)e^{\beta s} ds\right],
		$$
		where $\beta \geqslant 0$ is an arbitrary constant and $\|\cdot\|$ denotes the supreme norm, i.e. $\left\|a_s\right\| = \sup_{s \leqslant r \textless +\infty}\left|a_r\right|$.
		
		\textbf{(H2)}
		$$
		E\left[\int_0^T|f(s, 0,0)|^2 d s\right] <+\infty.
		$$

		\subsection{Comparative Analysis of Condition (H1)}\label{chapter condition compare}
		This section shows that our condition (H1) is weaker than the corresponding conditions in \cite{peng09AP} and \cite{yangzhe13ECP}. Consider the conditions for BSDEs with finite anticipation as follows:
		\begin{itemize}
			\item \textbf{(Ha)} Condition (H1) in Peng and Yang \cite{peng09AP}: There exists a constant $L>0$, such that for all $t \in[0, T], y, y^{\prime} \in \mathbb{R}^d$, $z, z^{\prime} \in \mathbb{R}^{d \times m}, \xi_{.}, \xi_{.}^{\prime} \in \mathcal{M}_{\mathcal{F}}^2\left(t, T+K ; \mathbb{R}^d\right), \eta_{.}, \eta_{.}^{\prime} \in \mathcal{M}_{\mathcal{F}}^2\left(t, T+K ; \mathbb{R}^{d \times m}\right)$, $r, \bar{r} \in [t, T+K]$, we have
			$$
			\begin{aligned}
				& \left|f\left(t, y, z, \xi_r, \eta_{\bar{r}}\right)-f\left(t, y^{\prime}, z^{\prime}, \xi_r^{\prime}, \eta_{\bar{r}}^{\prime}\right)\right| \\
				& \quad \leqslant L\left(\left|y-y^{\prime}\right|+\left|z-z^{\prime}\right|+E^{\mathcal{F}_t}\left[\left|\xi_r-\xi_r^{\prime}\right|+\left|\eta_{\bar{r}}-\eta_{\bar{r}}^{\prime}\right|\right]\right).
			\end{aligned}
			$$
			\item \textbf{(Hb)} Condition (H4) in Yang and Elliot \cite{yangzhe13ECP}: There exists a constant $L>0$, such that for all $t \in[0, T], y_{.}, y_{.}^{\prime} \in \mathcal{M}_\mathcal{F}^2\left(t, T+K; \mathbb{R}^d\right), z_{.}, z_{.}^{\prime} \in \mathcal{M}_\mathcal{F}^{2}\left(t, T+K; \mathbb{R}^{d \times m}\right)$, it follows that for any constant $\beta \geqslant 0$
			$$
			E\left[\int_{t}^{T}\left|f(s,y_{.},z_{.})-f(s,y_{.}^{\prime},z_{.}^{\prime}) \right|^2 e^{\beta s} ds\right]
			\leqslant LE\left[\int_{t}^{T+K}\left(|y_s-y^{\prime}_s|^2 + \left|z_s - z_s^{\prime}\right|^2\right)e^{\beta s} ds\right].
			$$
			\item \textbf{(Hc)} Our condition (H1) in finite anticipation settings: There exists a constant $L>0$, such that for all $t \in[0, T], y_{.}, y_{.}^{\prime} \in \mathcal{M}_\mathcal{F}^2\left(t, T+K; \mathbb{R}^d\right), z_{.}, z_{.}^{\prime} \in \mathcal{M}_\mathcal{F}^{2}\left(t, T+K; \mathbb{R}^{d \times m}\right)$, it follows that for any constant $\beta \geqslant 0$,
			$$
			E\left[\int_{t}^{T}\left|f(s,y_{.},z_{.})-f(s,y_{.}^{\prime},z_{.}^{\prime}) \right|^2 e^{\beta s} ds\right]
			\leqslant LE\left[\int_{t}^{T+K}\left(\|y_s-y^{\prime}_s\|^2 + \left|z_s - z_s^{\prime}\right|^2\right)e^{\beta s} ds\right].
			$$
		\end{itemize}
		
		\begin{remark}
			(Ha) is stronger than (Hb) (see Remarks 3.1 in Yang and Elliot \cite{yangzhe13ECP}), and we can easily show that (Hb) is stronger than (Hc) since $\left\|y_s-y^{\prime}_s\right\| = \sup_{s \leqslant r \leqslant T+K}\left|y_r-y^{\prime}_r\right|$. In the following, we give some examples to verify conditions (Ha), (Hb) and (Hc).
		\end{remark}
		\begin{example}
			For all $t \in[0, T]$, set $f(t, y_{.}) = E^{\mathcal{F}_t}\left[\int_{t}^{t+K}y_rdWr\right]$, then for any $y_{.}, y_{.}^{\prime} \in \mathcal{M}_\mathcal{F}^2\left(t, T+K; \mathbb{R}^d\right)$, we have
			$$
			\begin{aligned}
				E\left[\int_t^T|f(s, y_{.})-f(s, y_{.}^{\prime})|^2 d s\right]
				&\leqslant \int_t^TE\left[\left|\int_{s}^{s+K}\left(y_r - y^{\prime}_r\right) dWr\right|^2\right] ds\\
				&=\int_t^TE\left[\int_{s}^{s+K}\left|y_r - y^{\prime}_r\right|^2 dr\right] ds\\
				&\leqslant KE\left[\int_{t}^{T+K}\left\|y_{s}-y^{\prime}_{s}\right\|^2 ds\right].
			\end{aligned}
			$$
			Thus, (Hc) holds.
			
			A counterexample is that, for all $t\in[0,T]$, set $ y_{t}\equiv a$ and $y^{\prime}_{t} \equiv b$ where $a,b$ are given constants, then
			$$
			f\left(t, y_{.}\right)-f(t, y_{.}^{\prime})=(a-b)E^{\mathcal{F}_t}\left[W_{t+K}-W_t\right]\sim (a-b)\sqrt{K}\cdot\mathcal{N}(0, 1),
			$$
			where $\mathcal{N}(0, 1)$ is a standard normal distribution which is unbounded. Thus $f$ satisfies conditions (Hb) and (Hc) but not (Ha).
		\end{example}
		
		\begin{remark}
			Note that (Hb) is stronger than (Hc). A natural example is that, (Hc) allows $f$ to depend on $Y$-anticipation term through the sup-norm, e.g.
			$$
			f(t, y_{.})=E^{\mathcal{F}_t}\left[\phi(\sup _{s \in[t, T+K]}\left|y_s\right|)\right],
			$$
			where $\phi(\cdot)$ is a given Lipschitz function. Moreover, the following example shows that $f$ satisfies (Hc) but not (Hb).
		\end{remark}
		
		\begin{example}
			For all $t\in[0,T]$, set $f(t, y .)=L E^{\mathcal{F}_t}\left[  y_\tau 1_{[0, \tau]}(t)\right]$,
			where $L > 0$ and $\tau \in (0,T)$ are given constants. Then for any $y_{.}, y_{.}^{\prime} \in \mathcal{M}_\mathcal{F}^2\left(t, T+K; \mathbb{R}^d\right)$,
			$$
			|f(t, y_{.})-f(t, y^{\prime}_{.})|
			= LE^{\mathcal{F}_t}\left[ \left|y_\tau-y^{\prime}_\tau\right| \cdot 1_{[0, \tau]}(t)\right]
			\leqslant L E^{\mathcal{F}_t}\left[\sup _{t \leqslant s \leqslant T+K}\left|y_s-y^{\prime}_s\right|\right].
			$$
			Thus, (Hc) holds.
			
			{A counterexample is that, for all $t\in[0,\tau)$, set $ y_{s}\equiv0$ and perturbed $y^{\prime}_{s}=\delta\cdot 1_{\{\tau\}}(s)$ where $\delta \textgreater 0$ is a given constant. Then for (Hb),
				$$
				\begin{aligned}
					E\left[ \int_t^T |f(s,y_\cdot) - f(s,y'_\cdot)|^2 ds \right]&=E\left[\int_t^T\left|0-L y^{\prime}_\tau \cdot 1_{[0, \tau]}(s)\right|^2 d s\right]
					=L^2 \delta^2 (\tau-t)>0,\\
					E\left[ \int_0^T |y_s - y'_s|^2 ds \right]&=E\left[\int_0^T\left|0-\delta \cdot 1_{\{\tau\}}(s)\right|^2 d s\right]=0.
				\end{aligned}
				$$
				Thus $f$ does not satisfy (Hb)}.
		\end{example}
		
		\subsection{Priori Estimate}
		\indent We first give the priori estimate of the solutions of IABSDEs that is used in the following sections.
		\begin{lemma}\label{lemma basic estimate}
			Let assumptions (H1) and (H2) hold, if $(Y ., Z.)$ is the solution of the IABSDE (\ref{ABSDE}), then there exists a positive constant $C_0$ that  depends  on $L$ in (H1) and $T$, such that for any given terminal condition $\xi. \in \mathcal{S}^2_{\mathcal{F}}\left(T, +\infty ; \mathbb{R}^d\right), \eta. \in \mathcal{M}_{\mathcal{F}}^{2,\beta}\left(T, +\infty; \mathbb{R}^{d \times m}\right)$, the solution $(Y ., Z.)$ satisfies
			\begin{equation}\label{basic estimate}
				\begin{aligned}
					E&\left[\sup _{0 \leqslant s \textless +\infty}\left|Y_s\right|^2+\int_0^{+\infty}e^{\beta s}\left|Z_s\right|^2 d s\right]\\
					&\leqslant C_0 E\left[\|\xi_T\|^2+\int_{T}^{+\infty}e^{\beta s}\left|\eta_s\right|^2 ds+\int_0^T|f(s,0,0)|^2 d s\right].
				\end{aligned}
			\end{equation}	
			Moreover, $(Y., Z.) \in \mathcal{S}_{\mathcal{F}}^2\left(0, +\infty; \mathbb{R}^d\right) \times \mathcal{M}_{\mathcal{F}}^{2,\beta}\left(0, +\infty; \mathbb{R}^{d \times m}\right)$.
			
		\end{lemma}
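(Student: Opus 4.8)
The plan is to reduce (\ref{ABSDE}) to a classical BSDE on $[0,T]$ and then run the usual weighted energy estimate. On $[0,T]$ the first line of (\ref{ABSDE}) is a BSDE with terminal value $\xi_T$ and (anticipating) driver $g(s):=f(s,\{Y_r\}_{r\in[s,+\infty)},\{Z_r\}_{r\in[s,+\infty)})$, while on $[T,+\infty)$ the solution is frozen to $(\xi.,\eta.)$; hence the parts of the left side of (\ref{basic estimate}) coming from $[T,+\infty)$ equal $E[\sup_{s\geq T}|\xi_s|^2]\leq E[\|\xi_T\|^2]$ and $E[\int_T^{+\infty}e^{\beta s}|\eta_s|^2\,ds]$, and everything reduces to bounding $E[\sup_{0\leq s\leq T}|Y_s|^2+\int_0^T e^{\beta s}|Z_s|^2\,ds]$ by the right side of (\ref{basic estimate}). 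The only ingredient beyond the finite-horizon theory is the control of $g$ through (H1)--(H2), which I treat first.

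Applying It\^o's formula to $e^{\beta t}|Y_t|^2$ on $[t,T]$ (with the usual localization by stopping times to remove the martingale term in expectation) and using $2\langle Y_s,g(s)\rangle\leq \gamma|Y_s|^2+\gamma^{-1}|g(s)|^2$ gives
\[
E\big[e^{\beta t}|Y_t|^2\big]+E\!\int_t^T e^{\beta s}|Z_s|^2\,ds\leq E\big[e^{\beta T}|\xi_T|^2\big]+(\gamma-\beta)E\!\int_t^T e^{\beta s}|Y_s|^2\,ds+\tfrac1\gamma E\!\int_t^T e^{\beta s}|g(s)|^2\,ds .
\]
For the last term I do not compare $g(s)$ with $f(s,0,0)$: because of the supremum norm in (H1) that would bring in the infinite-horizon quantity $E\!\int_t^{+\infty}e^{\beta s}\|Y_s\|^2\,ds$, which is not dominated by the data. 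Instead I compare $g(s)$ with $f(s,\widehat Y.,\widehat Z.)$, where $(\widehat Y_r,\widehat Z_r)=(\xi_r,\eta_r)$ for $r\geq T$ and $(\widehat Y_r,\widehat Z_r)=(0,0)$ for $r<T$. Since $Y-\widehat Y$ and $Z-\widehat Z$ vanish on $[T,+\infty)$, (H1) collapses the tail and gives $E\!\int_t^T e^{\beta s}|g(s)-f(s,\widehat Y.,\widehat Z.)|^2\,ds\leq L\,E\!\int_t^T e^{\beta s}\big(\sup_{s\leq u\leq T}|Y_u|^2+|Z_s|^2\big)\,ds$, while $E\!\int_0^T e^{\beta s}|f(s,\widehat Y.,\widehat Z.)|^2\,ds$ depends only on the data and is bounded, via (H1) once more together with (H2) and the integrability of $\xi.$ and $\eta.$, by a constant multiple of the right side of (\ref{basic estimate}). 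Choosing $\gamma$ large enough that the $|Z_s|^2$-part of the last term is absorbed on the left, and $\beta\geq\gamma$ (permissible because (H1) holds with the same $L$ for every $\beta\geq0$) so that the $|Y_s|^2$-term has nonpositive coefficient, I arrive at
\[
E\big[e^{\beta t}|Y_t|^2\big]+\tfrac12 E\!\int_t^T e^{\beta s}|Z_s|^2\,ds\leq \mathcal D+\tfrac12 E\!\int_t^T e^{\beta s}\,\sup_{s\leq u\leq T}|Y_u|^2\,ds ,
\]
with $\mathcal D$ a fixed multiple of the right side of (\ref{basic estimate}).

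To upgrade this to a supremum bound I repeat the computation keeping $\sup_{t\leq\tau\leq T}$ inside and apply the Burkholder--Davis--Gundy inequality to the term $\int_\tau^T e^{\beta s}\langle Y_s,Z_s\,dW_s\rangle$; absorbing the resulting $\tfrac14 E\sup_{t\leq\tau\leq T}e^{\beta\tau}|Y_\tau|^2$, using the $Z$-bound from the previous display, and noting $E\big[e^{\beta s}\sup_{s\leq u\leq T}|Y_u|^2\big]\leq M(s)$ for $M(t):=E\sup_{t\leq\tau\leq T}e^{\beta\tau}|Y_\tau|^2$, one obtains an inequality of the form $M(t)\leq A+B\int_t^T M(s)\,ds$, with $A$ a multiple of the right side of (\ref{basic estimate}) and $B$ a constant depending on $L$ and $T$. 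Backward Gronwall gives $M(0)\leq Ae^{BT}$, hence the bound on $E\sup_{0\leq s\leq T}|Y_s|^2$; feeding it back into the previous display controls $E\!\int_0^T e^{\beta s}|Z_s|^2\,ds$, and adding the frozen tails yields (\ref{basic estimate}) and the membership $(Y.,Z.)\in\mathcal S^2_{\mathcal F}(0,+\infty;\mathbb R^d)\times\mathcal M^{2,\beta}_{\mathcal F}(0,+\infty;\mathbb R^{d\times m})$. The step I expect to be delicate is exactly the one flagged above: the supremum norm in (H1) ranges over $[s,+\infty)$, so $g$ cannot be controlled by a naive comparison with the zero process; the comparison must be taken against the frozen terminal data $(\widehat Y.,\widehat Z.)$ so that every Lipschitz increment is concentrated on the finite interval $[0,T]$. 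The remaining work — Young's inequality, the choice of $\gamma$ and $\beta$, Burkholder--Davis--Gundy, and Gronwall — is routine.
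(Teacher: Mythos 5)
Your overall machinery (It\^o's formula applied to $e^{\beta s}|Y_s|^2$, Young's inequality, Burkholder--Davis--Gundy, backward Gronwall, then adding the frozen tails on $[T,+\infty)$) coincides with the paper's; the genuine difference is the point at which you invoke (H1), and that is exactly where your argument has a gap. Your increment estimate is fine: since $Y-\widehat Y$ and $Z-\widehat Z$ vanish on $[T,+\infty)$, (H1) indeed collapses to the window $[t,T]$ and gives $L\,E\int_t^T e^{\beta s}\bigl(\sup_{s\leq u\leq T}|Y_u|^2+|Z_s|^2\bigr)ds$. But the difficulty you flagged is only displaced, not removed: you still must bound $E\int_0^T e^{\beta s}|f(s,\widehat Y_\cdot,\widehat Z_\cdot)|^2ds$, and the only available tools are (H1) and (H2). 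Comparing $(\widehat Y,\widehat Z)$ with $(0,0)$ via (H1) produces $L\,E\int_0^{+\infty}e^{\beta s}\bigl(\|\widehat Y_s\|^2+|\widehat Z_s|^2\bigr)ds$, whose $Y$-part contains $E\int_T^{+\infty}e^{\beta s}\|\xi_s\|^2ds$. This quantity does not appear on the right-hand side of (\ref{basic estimate}), is not dominated by $E\bigl[\|\xi_T\|^2\bigr]$, and is typically infinite for $\xi\in\mathcal S^2_{\mathcal F}(T,+\infty;\mathbb R^d)$ because $e^{\beta s}$ is not integrable at infinity (e.g.\ the deterministic $\xi_s=e^{-\beta s/4}$ lies in $\mathcal S^2_{\mathcal F}(T,+\infty)$ while $\int_T^{+\infty}e^{\beta s}\|\xi_s\|^2ds=\int_T^{+\infty}e^{\beta s/2}ds=+\infty$). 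So the sentence claiming that this term is bounded ``via (H1) once more together with (H2)\dots by a constant multiple of the right side'' is precisely the step that fails; without it you have no constant $\mathcal D$, and the Gronwall argument never starts.

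For comparison, the paper takes the route you rejected: it splits off $f(r,0,0)$ and, in estimate (\ref{ys}), keeps the resulting $\|Y_r\|^2$-term integrated only over the finite window $[s,T]$, later splitting $\|Y_r\|\leq\sup_{r\leq k\leq T}|Y_k|+\|\xi_T\|$ there, so the $e^{\beta s}$-weighted tail of $\xi$ never enters; the only tail that appears is $\int_T^{+\infty}e^{\beta r}|\eta_r|^2dr$, which is part of the data. In effect the paper uses (H1) with the $y$-integral confined to $[t,T]$; under that reading the naive comparison with $f(s,0,0)$ that you dismissed goes through directly and your detour via $(\widehat Y,\widehat Z)$ is unnecessary, while under the literal reading of (H1) (with the $y$-integral over $[t,+\infty)$) your objection applies to the paper's step as well --- but then your fix does not rescue the estimate either: you would need either to add $E\int_T^{+\infty}e^{\beta s}\|\xi_s\|^2ds$ to the right-hand side (changing the statement) or to assume a finite-window or pointwise form of the Lipschitz condition. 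As written, your proposal is incomplete at the reference-term bound.
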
	
		\begin{proof} For $s \in[0, T]$, applying It\^{o}'s formula to $e^{\beta s}\left|Y_s\right|^2$, we obtain
			\begin{equation}\label{first ito}
				\begin{aligned}
					e^{\beta s}\left|Y_s\right|^2 & +\int_s^T e^{\beta r}\left(\beta\left|Y_r\right|^2+\left|Z_r\right|^2\right) d r \\
					= & e^{\beta T}\left|Y_T\right|^2-2 \int_s^T e^{\beta r}\left(Y_r, Z_r d W_r\right) \\
					& +2 \int_s^T e^{\beta r}\left(f\left(r, \{Y_k\}_{k\in [r,+\infty)}, \{Z_k\}_{k\in [r,+\infty)}\right), Y_r\right) d r.
				\end{aligned}
			\end{equation}
			Taking expectation on both sides of (\ref{first ito}), and note that
			\begin{equation}\label{ys}
				\begin{aligned}
					&2E\left[\int_{s}^{T}e^{\beta r}\left(f\left(r, \{Y_k\}_{k\in [r,+\infty)}, \{Z_k\}_{k\in [r,+\infty)}\right), Y_r\right)dr\right] \\
					= & 2E\left[\int_{s}^{T}e^{\beta r}\left(f\left(r, \{Y_k\}_{k\in [r,+\infty)}, \{Z_k\}_{k\in [r,+\infty)}\right)-f\left(r, 0, 0\right)+f\left(r, 0, 0\right), Y_r\right)dr \right]\\
					\leqslant & E\left[\int_{s}^{T}e^{\beta r}\left(2L\left|Y_r\right|^2 + \frac{1}{2L}\left|f\left(r, \{Y_k\}_{k\in [r,+\infty)}, \{Z_k\}_{k\in [r,+\infty)}\right)-f\left(r, 0, 0\right)\right|^2\right) dr \right]\\
					&+2E\left[\int_{s}^{T}e^{\beta r}\left(f\left(r, 0, 0\right), Y_r\right)dr \right]\\
					\leqslant & E\left[2L\int_{s}^{T}e^{\beta r}\left|Y_r\right|^2 dr + \frac{1}{2} \int_s^T e^{\beta r}E^{\mathcal{F}_r}\left[\|Y_r\|^2\right] dr
					+ \frac{1}{2}\int_{s}^{+\infty}e^{\beta r}\left|Z_r\right|^2 dr \right]\\
					&+ 2E\left[\int_{s}^{T}e^{\beta r}\left(f\left(r, 0, 0\right), Y_r\right)dr\right].
				\end{aligned}
			\end{equation}
			We obtain for $s \in [0,T]$,
			$$
			\begin{aligned}
				E\left[e^{\beta s}\left|Y_s\right|^2\right]& +E\left[\int_s^T e^{\beta r}\left[\left(\beta-2L\right)\left|Y_r\right|^2+\frac{1}{2}\left|Z_r\right|^2\right] d r \right]\\
				\leqslant & E\left[e^{\beta T}\|\xi_T\|^2 +\frac{1}{2}\int_{T}^{+\infty}e^{\beta r}\left|\eta_r\right|^2 dr + 2\int_s^T e^{\beta r} \left(f(r, 0,0), Y_r\right)dr\right]\\
				&+ \frac{1}{2} \int_s^T e^{\beta r}E\left[\|Y_r\|^2\right] dr.
			\end{aligned}
			$$
			Set $\beta = 2L + 2$, we have the estimate over the $Z$ component, 
			\begin{equation}\label{13}
				\begin{aligned}
					E&\left[\int_s^T e^{\beta r}\left|Z_r\right|^2 d r\right]\\
					\leqslant & E\left[2e^{\beta T}\|\xi_T\|^2 + \int_{T}^{+\infty}e^{\beta r}\left|\eta_r\right|^2 dr + 4\int_s^T e^{\beta r} \left(f(r, 0,0), Y_r\right)dr\right] + \int_s^T e^{\beta r}E\left[\|Y_r\|^2\right] dr.
				\end{aligned}
			\end{equation}
			For the martingale part in (\ref{first ito}), notice that for $s \in [t,T]$,
			$$
			\begin{aligned}
				\left|\int_s^T e^{\beta r}\left(Y_r, Z_r d W_r\right)\right| & =\left|\int_t^T e^{\beta r}\left(Y_r, Z_r d W_r\right)-\int_t^s e^{\beta r}\left(Y_r, Z_r d W_r\right)\right| \\
				& \leqslant\left|\int_t^T e^{\beta r}\left(Y_r, Z_r d W_r\right)\right|+\left|\int_t^s e^{\beta r}\left(Y_r, Z_r d W_r\right)\right|,
			\end{aligned}
			$$
			by the Burkholder-Davis-Gundy inequality, we have
			\begin{equation}\label{14}
				\begin{aligned}
					E & {\left[\sup _{t \leqslant s \leqslant T}\left|\int_s^T e^{\beta r}\left(Y_r, Z_r d W_r\right)\right|\right] } \\
					& \leqslant 2 E\left[\sup _{t \leqslant s \leqslant T}\left|\int_t^s e^{\beta r}\left(Y_r, Z_r d W_r\right)\right|\right] \\
					& \leqslant 6 E\left[\left(\int_t^T e^{2 \beta r}\left|Y_r\right|^2\left|Z_r\right|^2 d r\right)^{1 / 2}\right] \\
					& \leqslant 6 E\left[\left(\sup _{t \leqslant r \leqslant T} e^{\beta r / 2}\left|Y_r\right|\right)\left(\int_t^T e^{\beta r}\left|Z_r\right|^2 d r\right)^{1 / 2}\right] \\
					& \leqslant \frac{1}{4} E\left[\sup _{t \leqslant r \leqslant T} e^{\beta r}\left|Y_r\right|^2\right]+36 E\left[\int_t^T e^{\beta r}\left|Z_r\right|^2 d r\right].
				\end{aligned}
			\end{equation}
			Combining estimates (\ref{first ito}), (\ref{13}) and (\ref{14}), it follows that
			\begin{equation} \label{middle}
				\begin{aligned}
					(\beta- 2 &L)E \left[\sup _{t \leqslant s \leqslant T} e^{\beta s}\left|Y_s\right|^2\right] \\
					\leqslant & E\left[e^{\beta T}\|\xi_T\|^2 + \frac{1}{2}\int_{T}^{+\infty}e^{\beta r}\left|\eta_r\right|^2 dr + 2 \int_t^T e^{\beta r}\left|f(r,0,0) \| Y_r\right| d r\right] \\
					& +2E\left[ \sup _{t \leqslant s \leqslant T}\left|\int_s^T e^{\beta r}\left(Y_r, Z_r d W_r\right)\right|\right] + \frac{1}{2}\int_t^T e^{\beta r} E\left[\|Y_r\|^2\right] d r \\
					\leqslant & E\left[e^{\beta T}\|\xi_T\|^2 +\frac{1}{2}\int_{T}^{+\infty}e^{\beta r}\left|\eta_r\right|^2 dr + 2 \int_t^T e^{\beta r}\left|f(r,0,0) \| Y_r\right| d r\right]\\
					& +\frac{1}{2} E\left[\sup _{t \leqslant r \leqslant T} e^{\beta r}\left|Y_r\right|^2\right] +72 E\left[\int_t^T e^{\beta r}\left|Z_r\right|^2 d r\right]
					+ \frac{1}{2}\int_t^T e^{\beta r} E\left[\|Y_r\|^2\right] d r\\
					\leqslant & E\left[(1+72*2)e^{\beta T}\|\xi_T\|^2 + (\frac{1}{2}+72)\int_{T}^{+\infty}e^{\beta r}\left|\eta_r\right|^2 dr\right]\\
					& + (2+72*4)E\left[ \int_t^T e^{\beta r}\left|f(r,0,0) \| Y_r\right| d r\right]\\
					& +\frac{1}{2} E\left[\sup _{t \leqslant r \leqslant T} e^{\beta r}\left|Y_r\right|^2\right] + (\frac{1}{2}+72)\int_t^T e^{\beta r} E\left[\|Y_r\|^2\right] d r,
				\end{aligned}
			\end{equation}
			where
			\begin{equation}\label{yf}
				\begin{aligned}
					E&\left[\int_t^T e^{\beta r}\left|f(r,0,0) \| Y_r\right| d r\right] \\
					& \leqslant E\left[\int_t^T \left(\sup_{t\leqslant r \leqslant T}e^{\beta r/2}\left| Y_r\right|\right)\left(e^{\beta r/2}\left|f(r,0,0)\right|\right)  d r\right] \\
					& \leqslant \frac{1}{4+72*8}E\left[\sup _{t \leqslant r \leqslant T} e^{\beta r}\left|Y_r\right|^2\right] + (2+72*4)Te^{\beta T}E\left[\int_t^T|f(s,0,0)|^2 d s\right].
				\end{aligned}
			\end{equation}
			Since $\|Y_r\|^2 \leqslant \left(\sup_{r\leqslant k\leqslant T}\left|Y_k\right| + \sup_{k\geqslant T}\left|Y_k\right|\right)^2$, then \\
			\begin{equation}\label{bc}
				\begin{aligned}
					\int_t^T& e^{\beta r} E\left[\|Y_r\|^2\right] d r \\
					& \leqslant 2 \int_t^T e^{\beta r}E\left[ \sup_{r\leqslant k\leqslant T} \left|Y_k\right|^2 + \|\xi_{T}\|^2 \right] d r\\
					& \leqslant 2 \int_t^T E\left[ \sup_{r\leqslant k\leqslant T}e^{\beta k} \left|Y_k\right|^2  \right] d r + 2Te^{\beta T}E\left[\|\xi_{T}\|^2\right].
				\end{aligned}
			\end{equation}
			Donote by $C_{0}$ a positive constant that depend only on $L$ and $T$, which we allow to change from line to line. Combining estimates (\ref{middle}), (\ref{yf}) and (\ref{bc}), we have
			$$
			\begin{aligned}
				(\beta - 2L - 1)E& \left[\sup _{t \leqslant s \leqslant T} e^{\beta s}\left|Y_s\right|^2\right] \\
				\leqslant & C_0 E\left[\|\xi_T\|^2+\int_{T}^{+\infty}e^{\beta r}\left|\eta_r\right|^2 dr+\int_t^T|f(s,0,0)|^2 d s \right]\\
				& +C_0 \int_t^T E\left[\sup _{r \leqslant k \leqslant T} e^{\beta k}\left|Y_k\right|^2\right] d r.
			\end{aligned}
			$$
			Since $\beta=2L+2$, then by the Gronwall's inequality, we obtain the estimate over $Y$ component
			\begin{equation}\label{finite estimate}
				\begin{aligned}
					E&\left[\sup _{t \leqslant s \leqslant T}e^{\beta s}\left|Y_s\right|^2\right] \leqslant C_0 E\left[\|\xi_T\|^2+\int_{T}^{+\infty}e^{\beta r}\left|\eta_r\right|^2 dr+ \int_t^T|f(s,0,0)|^2 d s\right].
				\end{aligned}
			\end{equation}
			Setting $t=0$ and noticing $\sup_{t \leqslant s \textless +\infty}\left|Y_s\right|^2 \leqslant \sup_{t \leqslant s \leqslant T}\left|Y_s\right|^2 + \|\xi_T\|^2 $, we obtain
			$$
			\begin{aligned}
				E&\left[\sup _{0 \leqslant s \textless +\infty}\left|Y_s\right|^2+\int_0^{+\infty}e^{\beta s}\left|Z_s\right|^2 d s\right]\\
				&\leqslant E\left[\sup _{0 \leqslant s \leqslant T}\left|Y_s\right|^2 + \|\xi_T\|^2 + \int_0^{T}e^{\beta s}\left|Z_s\right|^2 d s +\int_{T}^{+\infty}e^{\beta r}\left|\eta_r\right|^2 dr\right]\\
				&\leqslant C_0 E\left[\|\xi_T\|^2+\int_{T}^{+\infty}e^{\beta r}\left|\eta_r\right|^2 dr+\int_0^T|f(s,0,0)|^2 d s\right] \textless +\infty.
			\end{aligned}
			$$
			It is obvious that $Y$ has continuous path, thus  $(Y., Z.) \in \mathcal{S}_{\mathcal{F}}^2\left(0, +\infty; \mathbb{R}^d\right) \times \mathcal{M}_{\mathcal{F}}^{2,\beta}\left(0, +\infty; \mathbb{R}^{d \times m}\right)$. 
		\end{proof}
		\\
		\section{The Existence and Uniqueness Theorem}\label{chapter uniqueness and existence}
		In this section, we investigate the main result of this paper: the existence and uniqueness theorem for adapted solutions of IABSDEs.
		
		\begin{theorem}\label{ex and uni}
			Suppose that $f$ satisfies assumptions (H1) and (H2), then for any given terminal condition $\xi.\in\mathcal{S}_\mathcal{F}^2\left(T, +\infty; \mathbb{R}^d\right)$, $\eta.\in\mathcal{M}_{\mathcal{F}}^{2,\beta}\left(T, +\infty; \mathbb{R}^{d \times m}\right)$, the IABSDE (\ref{ABSDE}) has a unique solution, that is, there exists a unique pair of $\mathcal{F}_t$-adapted processes $(Y., Z.) \in \mathcal{S}_\mathcal{F}^2\left(0, +\infty; \mathbb{R}^d\right) \times \mathcal{M}_\mathcal{F}^{2,\beta}\left(0, +\infty; \mathbb{R}^{d \times m}\right)$ satisfying (\ref{ABSDE}).
		\end{theorem}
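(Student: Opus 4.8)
The plan is to prove Theorem~\ref{ex and uni} by a Banach fixed-point argument carried out on $[0,T]$, in the spirit of the finite-horizon construction of Yang and Elliot~\cite{yangzhe13ECP} but in a weighted norm tailored to the supremum appearing in (H1). The key structural observation is that the only genuinely unknown part of the equation is the restriction of $(Y_\cdot,Z_\cdot)$ to $[0,T]$, since on $[T,+\infty)$ the solution is prescribed to equal $(\xi_\cdot,\eta_\cdot)$; the infinite horizon enters only through the domain of $f$ and through the (finite) tail integral $E[\int_T^{+\infty}e^{\beta s}|\eta_s|^2\,ds]$.

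First I would fix a large constant $\beta>0$ (to be chosen at the end) and introduce the space $\mathcal{B}_\beta$ of pairs $(y_\cdot,z_\cdot)$ of $\mathcal{F}_t$-adapted processes on $[0,T]$ with norm $\|(y,z)\|_\beta^2:=E\big[\int_0^T e^{\beta s}\sup_{s\leqslant r\leqslant T}|y_r|^2\,ds+\int_0^T e^{\beta s}|z_s|^2\,ds\big]$, which is a Banach space (completeness follows from Fatou's lemma applied to the Cauchy condition). Given $(y,z)\in\mathcal{B}_\beta$, extend it by $y_r:=\xi_r$, $z_r:=\eta_r$ for $r\geqslant T$ and consider the classical BSDE on $[0,T]$ with terminal value $\xi_T$ and the frozen generator $g(s):=f\big(s,\{y_r\}_{r\in[s,+\infty)},\{z_r\}_{r\in[s,+\infty)}\big)$; this process is $\mathcal{F}_t$-adapted and, by (H1)--(H2) (using that $e^{\beta s}$ is bounded above and below on $[0,T]$), belongs to $\mathcal{M}_{\mathcal{F}}^2(0,T;\mathbb{R}^d)$. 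Lemma~\ref{classic BSDE uniqueness and exsitence} then yields a unique $(Y_\cdot,Z_\cdot)\in\mathcal{S}_{\mathcal{F}}^2(0,T;\mathbb{R}^d)\times\mathcal{M}_{\mathcal{F}}^2(0,T;\mathbb{R}^{d\times m})$ solving this classical BSDE, and I set $\Phi(y,z):=(Y,Z)$, a map from $\mathcal{B}_\beta$ into itself.

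Next I would show that $\Phi$ is a strict contraction for $\beta$ large. Taking two inputs and writing $(\hat Y,\hat Z)$, $\delta f_s$, $(\hat y,\hat z)$ for the respective differences, one has $\hat Y_T=0$ while $\hat y_r=\hat z_r=0$ for $r\geqslant T$, so that $\|\hat y_s\|=\sup_{s\leqslant r\leqslant T}|\hat y_r|$ and the right-hand side of (H1) over $[t,+\infty)$ collapses to $LE\big[\int_t^T e^{\beta s}(\sup_{s\leqslant r\leqslant T}|\hat y_r|^2+|\hat z_s|^2)\,ds\big]\leqslant L\|(\hat y,\hat z)\|_\beta^2$. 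Applying It\^{o}'s formula to $e^{\beta s}|\hat Y_s|^2$, using $2(\hat Y_s,\delta f_s)\leqslant\beta|\hat Y_s|^2+\beta^{-1}|\delta f_s|^2$ to absorb the $\beta|\hat Y_s|^2$ term and taking expectations gives $E\big[e^{\beta t}|\hat Y_t|^2+\int_t^T e^{\beta s}|\hat Z_s|^2\,ds\big]\leqslant\frac{L}{\beta}\|(\hat y,\hat z)\|_\beta^2$. Then, writing $\hat Y_r=\int_r^T\delta f_u\,du-\int_r^T\hat Z_u\,dW_u$, I would estimate $E\big[\int_0^T e^{\beta s}\sup_{s\leqslant r\leqslant T}|\hat Y_r|^2\,ds\big]$ by combining the Cauchy--Schwarz bound $(\int_s^T|\delta f_u|\,du)^2\leqslant\frac{e^{-\beta s}}{\beta}\int_s^T e^{\beta u}|\delta f_u|^2\,du$, the Doob/Burkholder--Davis--Gundy inequality for the martingale part, and Fubini's theorem, so that the outer weight is absorbed via $\int_0^u e^{\beta s}\,ds\leqslant\beta^{-1}e^{\beta u}$; this produces a further factor $\beta^{-1}C(L,T)$. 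Altogether $\|\Phi(y^1,z^1)-\Phi(y^2,z^2)\|_\beta^2\leqslant\frac{C(L,T)}{\beta}\|(y^1,z^1)-(y^2,z^2)\|_\beta^2$, and choosing $\beta>C(L,T)$ makes $\Phi$ a contraction, hence it has a unique fixed point $(Y_\cdot,Z_\cdot)\in\mathcal{B}_\beta$.

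Finally, since $(Y,Z)=\Phi(Y,Z)$ lies in the image of $\Phi$, the process $Y_\cdot$ has continuous paths; extending $(Y_\cdot,Z_\cdot)$ by $(\xi_\cdot,\eta_\cdot)$ on $[T,+\infty)$ produces a solution of (\ref{ABSDE}), and Lemma~\ref{lemma basic estimate} then shows $(Y_\cdot,Z_\cdot)\in\mathcal{S}_{\mathcal{F}}^2(0,+\infty;\mathbb{R}^d)\times\mathcal{M}_{\mathcal{F}}^{2,\beta}(0,+\infty;\mathbb{R}^{d\times m})$. For uniqueness, any solution of (\ref{ABSDE}) restricted to $[0,T]$ is a fixed point of $\Phi$, so it must coincide with $(Y,Z)$ there and with $(\xi,\eta)$ elsewhere. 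I expect the contraction estimate to be the main obstacle: because (H1) controls only the time-integral of $\delta f$, and does so through the supremum norm of the \emph{entire} future of the input, one must carefully interchange the order of integration and exploit the weight $e^{\beta s}$ to extract a decaying factor $1/\beta$ from both the drift and the stochastic-integral contributions to $\sup_{s\leqslant r\leqslant T}|\hat Y_r|$; once this is done the Lipschitz constant $L$ and the horizon $T$ are beaten simply by enlarging $\beta$. The completeness of $\mathcal{B}_\beta$ under its supremum-inside-the-integral norm and the membership of the frozen generator in $\mathcal{M}_{\mathcal{F}}^2(0,T;\mathbb{R}^d)$ are the remaining routine points.
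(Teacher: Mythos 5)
Your argument is correct in substance, but it takes a genuinely different route from the paper. The paper first proves uniqueness by applying It\^{o}'s formula to $e^{\beta s}|\tilde Y_s|^2$ with the fixed choice $\beta=2L+74$ and Gronwall's inequality (after invoking Lemma \ref{lemma basic estimate} to place both solutions in the right space), and then proves existence by a Picard scheme (\ref{picard}) in which only the $Y$-argument is frozen at the previous iterate while the generator still depends on the whole future of the \emph{current} $Z^{(n)}$; convergence comes from the iterated inequality $u^n(0)+v^n(0)\leqslant C\int_0^T u^{n-1}(s)\,ds$, which yields factorial decay at that fixed $\beta$. You instead freeze \emph{both} arguments, so each auxiliary equation is a classical BSDE handled by Lemma \ref{classic BSDE uniqueness and exsitence}, and you obtain a one-shot Banach fixed point by letting $\beta$ be large, exploiting that (H1) holds for every $\beta\geqslant 0$ with the same $L$; your weighted sup-inside-the-integral norm and the $1/\beta$ factors extracted from the drift and the BDG/Doob bounds are exactly what is needed, and existence and uniqueness come simultaneously. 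What each approach buys: yours avoids having to justify solvability of the anticipated-in-$Z$ equation at each Picard step (a point the paper leaves implicit), while the paper's works at a fixed moderate $\beta$ (so $\eta\in\mathcal{M}_{\mathcal{F}}^{2,\beta}$ is only needed for that value) and its iterate estimates $u^n,v^n$ are reused verbatim in the proof of Theorem \ref{comparision}. One caveat you should make explicit: the well-definedness of your map, i.e.\ that the frozen generator lies in $\mathcal{M}_{\mathcal{F}}^2(0,T;\mathbb{R}^d)$, requires bounding $f$ along the prescribed tail $(\xi,\eta)$, and the literal right-hand side of (H1) then contains $E\bigl[\int_T^{+\infty}e^{\beta s}\|\xi_s\|^2\,ds\bigr]$, which is not implied by $\xi\in\mathcal{S}^2_{\mathcal{F}}(T,+\infty;\mathbb{R}^d)$; either read the $\|y\|$-part of (H1) with the integral taken over $[t,T]$ (as the paper itself implicitly does in estimate (\ref{ys})) or add the corresponding tail-integrability assumption. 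This wrinkle is shared with the paper's own proof, so it does not undermine your approach, but it deserves a sentence.
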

		\begin{proof} \textbf{First, we verify the uniqueness of the solutions. }
			
			Let $\left(Y.,Z.\right)$ and $\left(\bar{Y}.,\bar{Z}.\right)$ be any two solutions of the IABSDE (\ref{ABSDE}). By Lemma \ref{lemma basic estimate},  $(Y_{.}, Z_{.}), (Y_{.}^{\prime}, Z_{.}^{\prime}) \in \mathcal{S}_{\mathcal{F}}^2\left(0, +\infty; \mathbb{R}^d\right) \times \mathcal{M}_{\mathcal{F}}^{2,\beta}\left(0, +\infty; \mathbb{R}^{d \times m}\right)$. Denote their differences by $\left(\tilde{Y}_{.}, \tilde{Z}_{.}\right)=\left(Y_{.}-Y_{.}^{\prime},Z_{.}-Z_{.}^{\prime}\right)$. Note that $\left(\tilde{Y}_{.}, \tilde{Z}_{.}\right)$ satisfies	
			$$
			\tilde{Y}_t=\int_t^{T} \tilde{f}_s d s- \int_t^{T} \tilde{Z}_s dW_s, \quad t \in[0, T].
			$$
			where
			$$
			\tilde{f}_t =f\left(t,\left\{Y_r\right\}_{r \in [t,+\infty)},\left\{Z_r\right\}_{r \in [t,+\infty)}\right)-f\left(t,\left\{\bar{Y}_r\right\}_{r \in [t,+\infty)},\left\{\bar{Z}_r\right\}_{r \in [t,+\infty)}\right).
			$$
			Applying It\^{o}'s formula to $e^{\beta s}\left|\tilde{Y}_s\right|^2$ for $s \in[0, T]$, we obtain
			\begin{equation}\label{kkkk}
				\begin{aligned}
					e^{\beta s}\left|\tilde{Y}_s\right|^2 & +\int_s^T e^{\beta r}\left(\beta\left|\tilde{Y}_r\right|^2+\left|\tilde{Z}_r\right|^2\right) d r \\
					= & 2 \int_s^T e^{\beta r}\left(\tilde{f}_r, \tilde{Y}_r\right) d r-2 \int_s^T e^{\beta r}\left(\tilde{Y}_r, \tilde{Z}_r d W_r\right) .
				\end{aligned}
			\end{equation}
			Similar with estimates (\ref{ys}), (\ref{13}) and (\ref{14}), we have for all $t\in[0,T]$
			$$
			\begin{aligned}
				2 E\left[\int_s^T e^{\beta r}\left(\tilde{f}_r, \tilde{Y}_r\right) d r\right]
				\leqslant  E\left[\int_s^T e^{\beta r}\left(2L\left|\tilde{Y}_r\right|^2 + \frac{1}{2}\left|\tilde{Z}_r\right|^2 + \frac{1}{2}E^{\mathcal{F}_r}\left[\|\tilde{Y}_r\|^2\right]\right)dr\right],
			\end{aligned}
			$$
			and
			$$
			\begin{aligned}
				E {\left[\sup _{t \leqslant s \leqslant T}\left|\int_s^T e^{\beta r}\left(\tilde{Y}_r, \tilde{Z}_r d W_r\right)\right|\right] }
				\leqslant& \frac{1}{4} E\left[\sup _{t \leqslant r \leqslant T} e^{\beta r}\left|\tilde{Y}_r\right|^2\right]+36 E\left[\int_t^T e^{\beta r}\left|\tilde{Z}_r\right|^2 d r\right],
			\end{aligned}
			$$
			and the estimate for $\tilde{Z}$
			$$
			\begin{aligned}
				E\left[\int_t^T e^{\beta r}\left|\tilde{Z}_r\right|^2 d r\right]
				\leqslant \int_t^T e^{\beta r}E\left[\|\tilde{Y}_r\|^2\right] dr.
			\end{aligned}
			$$
			Combining (\ref{kkkk}) and the estimates mentioned above, we have
			\begin{equation}\label{1234}
				\begin{aligned}
					(\beta - 2L - 73)E \left[\sup _{t \leqslant s \leqslant T} e^{\beta s}\left|\tilde{Y}_s\right|^2\right]
					\leqslant& (\frac{1}{2}+72)\int_0^T e^{\beta r}E\left[\|\tilde{Y}_r\|^2\right] dr \\
					\leqslant& (\frac{1}{2}+72)\int_t^T  E\left[\sup_{r \leqslant k \leqslant T}e^{\beta k}\left|\tilde{Y}_k\right|^2\right] d r.
				\end{aligned}
			\end{equation}
			{Set $\beta=2L+74$, and apply Gronwall's inequality to yield for all $t\in[0,T]$
				$$
				E\left[\sup _{t \leqslant s \leqslant T}|\tilde{Y}_s|^2\right]=0.
				$$
				Set $t=0$, it is obvious that
				$$
				E\left[\sup _{0 \leqslant s \leqslant T}|\tilde{Y}_s|^2\right] + E \left[\int_{0}^T e^{\beta s}\left|\tilde{Z}_s\right|^2 d s\right] = 0.
				$$}
			That is $\left(Y_{.},Z_{.}\right) =\left(Y_{.}^{\prime},Z_{.}^{\prime}\right)$.
			\\
			\\
			\textbf{Next, we show the existence of the solutions.}
			
			\noindent Define $Y_{t}^{(0)}=\xi_{t}$, $Z_{t}^{(0)}=\eta_{t}$ for $t \in [T,+\infty)$ and $Y_t^{(0)}=Z_t^{(0)}=0$, for $t\in [0,T]$. For $n=1,2, \ldots$, define the Picard sequence through
			\begin{equation}\label{picard}
				\left\{ \begin{aligned}
					Y_t^{(n)}=&\xi_T+\int_{t}^T f\left(s, \{Y_r^{(n-1)}\}_{r \in [s,+\infty)}, \{Z_r^{(n)}\}_{r \in [s,+\infty)}\right) d s-\int_{t}^T Z_s^{(n)} d W_s, \quad &&t\in [0,T];\\
					Y_t^{(n)}=&\xi_{t}, \quad &&t\in [T, +\infty);\\
					Z_t^{(n)}=&\eta_{t}, \quad &&t\in [T, +\infty).
				\end{aligned}\right.
			\end{equation}
			Obviously, $\left(Y_{.}^{(0)}, Z_{.}^{(0)}\right) \in \mathcal{S}_{\mathcal{F}}^2\left(0,+\infty ; \mathbb{R}^d\right)\times \mathcal{M}_{\mathcal{F}}^{2,\beta}\left(0,+\infty ; \mathbb{R}^{d \times m}\right)$. Similarly to (\ref{basic estimate}), it is easy to prove that
			$$
			E\left[\sup _{0 \leqslant t \textless +\infty}|Y_t^{(n)}|^2 +
			\int_0^{+\infty} e^{\beta t}\left|Z_t^{(n)}\right|^2 d t
			\right] < +\infty,
			\quad n=1,2, \ldots
			$$
			which shows that $\left\{\left(Y_{.}^{(n)}, Z_{.}^{(n)}\right) \right\}_{n \geqslant 1}$ is a sequence in $\mathcal{S}_{\mathcal{F}}^2\left(0,+\infty ; \mathbb{R}^d\right) \times\mathcal{M}_{\mathcal{F}}^{2,\beta}\left(0,+\infty ; \mathbb{R}^{d \times m}\right)$.
			Consider the equation
			$$
			\begin{aligned}
				Y_t^{(n)}-Y_t^{(n-1)}=&\int_{t}^T f\left(s, \{Y_r^{(n-1)}\}_{r \in [s,+\infty)}, \{Z_r^{(n)}\}_{r \in [s,+\infty)}\right)d s\\
				&-\int_{t}^T f\left(s, \{Y_r^{(n-2)}\}_{r \in [s,+\infty)}, \{Z_r^{(n-1)}\}_{r \in [s,+\infty)}\right)d s-\int_{t}^T \left(Z_s^{(n)}-Z_s^{(n-1)}\right) d W_s.
			\end{aligned}
			$$
			Analogous to (\ref{1234}), set $\beta=2L+74$, there exists a positive constant $C$ 
			such that
			$$
			\begin{aligned}
				E \left[\sup_{0\leqslant s\leqslant T}e^{\beta s}\left|Y_s^{(n)}-Y_s^{(n-1)}\right|^2\right] + &
				E\left[ \int_0^T e^{\beta s}\left|Z_s^{(n)}-Z_s^{(n-1)}\right|^2 ds\right]\nonumber\\
				\leqslant& C \int_0^T e^{\beta s} E\left[\|Y_s^{(n-1)}-Y_s^{(n-2)}\|^2 \right]d s \\
				\leqslant& C \int_0^T E\left[\sup_{s\leqslant r\leqslant T}e^{\beta r}\left|Y_r^{(n-1)}-Y_r^{(n-2)}\right|^2\right] d s.
			\end{aligned}
			$$
			Define $u^n(t)=E \left[\sup_{t\leqslant s\leqslant T}e^{\beta s}\left|Y_s^{(n)}-Y_s^{(n-1)}\right|^2\right]$ and
			$v^n(t)=E\left[\int_t^T e^{\beta s} \left|Z_s^{(n)}-Z_s^{(n-1)}\right|^2 ds\right]$, the  formula above can be rewritten as
			\begin{equation}\label{4444}
				u^n(0)+v^n(0)\leqslant C\int_{0}^{T}u^{n-1}(s) d s, \quad u^{n}(T)=0,\quad n=2,3, \ldots
			\end{equation}
			By iterating the above inequality, we can deduce that
			$$
			u^n(0)\leqslant \frac{(TC)^{n-1}}{(n-1)!}u^1(0),
			$$
			and
			$$
			v^n(0)\leqslant \frac{T^{n-1}C^{n-2}}{(n-2)!}u^1(0),\quad
			$$
			where we can easily show that  $u^1(0)\textless +\infty$, which implies that $\left\{\left(Y_{.}^{(n)}, Z_{.}^{(n)}\right)\right\}_{n \geqslant 1}$ is a Cauchy sequence in Banach space $\mathcal{S}_{\mathcal{F}}^2\left(0,+\infty ; \mathbb{R}^d\right) \times$ $\mathcal{M}_{\mathcal{F}}^{2,\beta}\left(0,+\infty ; \mathbb{R}^{d \times m}\right)$.  Then from (\ref{4444}),  $\left\{Y^n\right\}_{n \geqslant 1}$ is also a Cauchy sequence in $L^2\left(\Omega; C\left([0, +\infty) ; \mathbb{R}^d\right) \right)$. Passing the limit in (\ref{picard}), as $n \rightarrow \infty$, we obtain that the pair  $\left(Y., Z.\right)$ defined by
			$$
			Y_{t} = \lim _{n \rightarrow \infty} Y_{t}^{(n)}, \quad Z_{t} = \lim _{n \rightarrow \infty} Z_{t}^{(n)}
			$$
			solves equation (\ref{ABSDE}).
		\end{proof}
		\\
		\section{Comparison Theorems}\label{chapter comparison}
		
		The comparison theorem, as a fundamental tool in BSDE theory, was first obtained for classical 1-dimensional BSDEs in Peng \cite{Peng1992}.
		For BSDEs with finite anticipation, the comparison theorem requires stronger assumptions. Imposing a monotonicity condition on $f$, Peng and Yang \cite{peng09AP} established the comparison theorems for 1-dimensional anticipated BSDEs independent of $Z$-anticipation term,
		and see \cite{21SPL, Xu2011, Xu2016, yangzhe13SPA} for further development in this direction.
		
		In this section, we extend the comparison theorems in Peng and Yang \cite{peng09AP} to the infinite anticipation settings without imposing additional technical conditions. We will apply these results to solve an optimal control problem in Section \ref{chapter Stochastic control problem}.
		\begin{theorem}\label{comparision}
			Let $\left(Y_{.}^{(1)}, Z_{.}^{(1)}\right)$ and $\left(Y_{.}^{(2)}, Z_{.}^{(2)}\right)$ be, respectively, the solutions of the following two 1-dimensional IABSDEs:
			$$
			\left\{\begin{aligned}
				Y_t^{(j)} & =\xi_T^{(j)} + \int_t^{T} f_{j}\left(s, \{Y_r^{(j)}\}_{r \in [s,+\infty)}, Z_s^{(j)} \right) d s- \int_t^{T} Z_s^{(j)} d W_s, & & t \in[0, T]; \\
				Y_t^{(j)} & =\xi_t^{(j)}, && t\in [T, +\infty)
			\end{aligned}\right.
			$$
			where $j=1,2$. Assume that for $j=1,2$, $f_j$ satisfies (H1) and (H2), $\xi_{.}^{(j)} \in \mathcal{S}_{\mathcal{F}}^2\left(T, +\infty\right)$, and for all $t \in[0, T], z \in \mathbb{R}^m, f_2(t, \cdot, z)$ is increasing, i.e., $f_2(t, y., z) \geqslant$ $f_2\left(t, y.^{\prime}, z\right)$, if $y_r \geqslant y_r^{\prime}$, $y., y.^{\prime} \in \mathcal{S}_{\mathcal{F}}^2\left(t, +\infty \right), r \in[t, +\infty)$. If $\xi_s^{(1)} \geqslant \xi_s^{(2)}, s \in[T, +\infty)$, and $f_1(t, y., z) \geqslant f_2(t, y., z), t \in[0, T], y. \in \mathcal{S}_{\mathcal{F}}^2\left(t, +\infty\right), z \in \mathbb{R}^m$, then
			$$
			Y_t^{(1)} \geqslant Y_t^{(2)}, \quad \text { a.e., a.s. }
			$$
		\end{theorem}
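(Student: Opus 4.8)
The plan is to reduce the comparison of the two IABSDEs to a repeated application of the classical finite-horizon comparison result (Lemma~\ref{classic BSDE comparison}) along the Picard scheme used in the proof of Theorem~\ref{ex and uni}. For $j=1,2$ I would construct the iterates $(Y^{(j),n}_\cdot,Z^{(j),n}_\cdot)$ with the \emph{same} initialization, $Y^{(j),0}_t=Z^{(j),0}_t=0$ for $t\in[0,T]$ and $Y^{(j),0}_t=\xi^{(j)}_t$ for $t\in[T,+\infty)$, and, for $n\geq 1$,
\[
Y_t^{(j),n}=\xi_T^{(j)}+\int_{t}^T f_j\!\left(s,\{Y_r^{(j),n-1}\}_{r\in[s,+\infty)},Z_s^{(j),n}\right)ds-\int_{t}^T Z_s^{(j),n}\,dW_s,\qquad t\in[0,T],
\]
together with $Y^{(j),n}_t=\xi^{(j)}_t$ for $t\in[T,+\infty)$. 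For each fixed $n$ this is a classical $1$-dimensional BSDE on $[0,T]$ with terminal value $\xi^{(j)}_T$ and generator $g^{(j),n}(s,z):=f_j(s,\{Y^{(j),n-1}_r\}_{r\in[s,+\infty)},z)$, which is $\mathcal{F}_s$-adapted, Lipschitz in $z$, and square-integrable at $z=0$ over $[0,T]$ (the last point from (H1), (H2) and $Y^{(j),n-1}_\cdot\in\mathcal{S}^2_\mathcal{F}$); hence Lemmas~\ref{classic BSDE uniqueness and exsitence} and~\ref{classic BSDE comparison} are available at every level.

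The core of the argument is an induction on $n$ showing that $Y^{(1),n}_r\geq Y^{(2),n}_r$ for all $r\in[0,+\infty)$. The base case $n=0$ holds because the two iterates coincide on $[0,T]$ (both $0$) and $\xi^{(1)}_r\geq\xi^{(2)}_r$ on $[T,+\infty)$. For the inductive step, assuming $Y^{(1),n-1}_r\geq Y^{(2),n-1}_r$ for every $r$, one has, for all $s\in[0,T]$ and all $z\in\mathbb{R}^m$,
\[
g^{(1),n}(s,z)=f_1\!\left(s,\{Y^{(1),n-1}_r\}_{r\geq s},z\right)\geq f_2\!\left(s,\{Y^{(1),n-1}_r\}_{r\geq s},z\right)\geq f_2\!\left(s,\{Y^{(2),n-1}_r\}_{r\geq s},z\right)=g^{(2),n}(s,z),
\]
where the first inequality is the hypothesis $f_1\geq f_2$ and the second is the monotonicity of $f_2$ in its $Y$-argument applied to the induction hypothesis. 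Since also $\xi^{(1)}_T\geq\xi^{(2)}_T$, Lemma~\ref{classic BSDE comparison} yields $Y^{(1),n}_t\geq Y^{(2),n}_t$ on $[0,T]$, while on $[T,+\infty)$ the iterates equal $\xi^{(1)}_\cdot\geq\xi^{(2)}_\cdot$; this closes the induction.

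It remains to pass to the limit. By the contraction estimate established in the proof of Theorem~\ref{ex and uni}, for each $j$ the Picard sequence is Cauchy in $\mathcal{S}^2_\mathcal{F}(0,+\infty;\mathbb{R})\times\mathcal{M}^{2,\beta}_\mathcal{F}(0,+\infty;\mathbb{R}^{m})$ and converges to the unique solution $(Y^{(j)}_\cdot,Z^{(j)}_\cdot)$ of the $j$-th IABSDE. In particular $E\big[\sup_{t\geq 0}|Y^{(j),n}_t-Y^{(j)}_t|^2\big]\to 0$, so along a subsequence $Y^{(j),n_k}_t\to Y^{(j)}_t$ a.s., uniformly in $t$. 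Letting $k\to\infty$ in $Y^{(1),n_k}_t\geq Y^{(2),n_k}_t$ gives $Y^{(1)}_t\geq Y^{(2)}_t$ for a.e.\ $t$, a.s., and, by path-continuity, in fact for all $t$ simultaneously, a.s.

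The step I expect to be the main obstacle is the bookkeeping at each Picard level: checking that freezing the $Y$-trajectory leaves a generator $g^{(j),n}(s,z)$ honestly satisfying conditions (a)--(b) of the classical framework — genuinely Lipschitz in the current variable $z$ with $g^{(j),n}(\cdot,0)\in\mathcal{M}^2_\mathcal{F}(0,T;\mathbb{R})$ — and that the monotonicity of $f_2$ transfers to the pointwise inequality $g^{(1),n}\geq g^{(2),n}$ on all of $\mathbb{R}^m$ (not merely along the realized trajectories), which is exactly the input required by Lemma~\ref{classic BSDE comparison}. Once this is in place, the rest is the routine induction above together with the convergence already proved for Theorem~\ref{ex and uni}.
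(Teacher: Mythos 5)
Your argument is sound and rests on the same two pillars as the paper's proof — the classical comparison result (Lemma \ref{classic BSDE comparison}) applied at frozen Picard levels, and the convergence of the Picard scheme from Theorem \ref{ex and uni} — but it is organized differently. The paper does not compare two parallel iterations: it freezes the \emph{true} solution $Y^{(1)}$ into $f_2$ to define an auxiliary classical BSDE $(Y^{(3)},Z^{(3)})$ with terminal value $\xi^{(2)}_T$, uses $f_1\geq f_2$ once (at that single level) to get $Y^{(1)}\geq Y^{(3)}$, and then iterates only with the generator $f_2$ and data $\xi^{(2)}$, producing a decreasing chain $Y^{(3)}\geq Y^{(4)}\geq\cdots$ via the monotonicity of $f_2$; it then has to show this chain converges and that its limit solves the second IABSDE (via an explicit estimate on the generators) so that uniqueness (Theorem \ref{ex and uni}) identifies the limit with $Y^{(2)}$. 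Your double-sequence induction uses $f_1\geq f_2$ together with the monotonicity of $f_2$ at \emph{every} level, but in exchange the limit identification is free, since both sequences are exactly the Picard iterations already shown in Theorem \ref{ex and uni} to converge to the respective solutions; the paper's route needs the extra limit-identification step but only one comparison involving $f_1$. Two small points to make explicit if you write this up: (i) the bookkeeping you flag (that each frozen generator $g^{(j),n}(s,z)=f_j(s,\{Y^{(j),n-1}_r\}_{r\geq s},z)$ satisfies (a)--(b), in particular pointwise Lipschitz continuity in the current $z$) is indeed needed, and the paper makes the same implicit assumption in invoking Lemmas \ref{classic BSDE uniqueness and exsitence} and \ref{classic BSDE comparison}, so you are not worse off; (ii) Lemma \ref{classic BSDE comparison} yields $Y^{(1),n}_t\geq Y^{(2),n}_t$ only a.e., a.s., so before feeding the inequality into the monotonicity of $f_2$ at the next level you should upgrade it to all $t$ a.s.\ by path continuity of the iterates, exactly as you do at the final limit stage.
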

		\begin{proof}
			Set
			$$
			\left\{\begin{aligned}
				Y_t^{(3)} = & \xi_T^{(2)}+\int_t^T f_2\left(s,\{Y_r^{(1)}\}_{r \in [s,+\infty)}, Z_s^{(3)}\right) d s-\int_t^T Z_s^{(3)} d W_s, \quad &&t\in[0, T]; \\
				Y_t^{(3)} = & \xi_t^{(2)}, \quad &&t\in[T, +\infty),
			\end{aligned}\right.
			$$
			by Lemma \ref{classic BSDE uniqueness and exsitence}, there exists a unique pair of $\mathcal{F}_t$-adapted processes $\left(Y_{.}^{(3)}, Z_{.}^{(3)}\right)$ $\in \mathcal{S}_{\mathcal{F}}^2(0, +\infty) \times \mathcal{M}_{\mathcal{F}}^2\left(0, T ; \mathbb{R}^m\right)$ satisfying the above classical BSDE. Since for all $s \in[0, T], z \in \mathbb{R}^m$,
			$$
			f_1\left(s, \{Y_r^{(1)}\}_{r \in [s,+\infty)}, z\right) \geqslant f_2\left(s, \{Y_r^{(1)}\}_{r \in [s,+\infty)}, z\right),
			$$
			by Lemma \ref{classic BSDE comparison}, we get
			$$
			Y_t^{(1)} \geqslant Y_t^{(3)}, \quad \text { a.e., a.s. }
			$$
			Set
			$$
			\left\{\begin{aligned}
				Y_t^{(4)} = & \xi_T^{(2)}+\int_t^T f_2\left(s, \{Y_r^{(3)}\}_{r \in [s,+\infty)}, Z_s^{(4)}\right) d s - \int_t^T Z_s^{(4)} d W_s, && t \in[0, T]; \\ Y_t^{(4)} = & \xi_t^{(2)}, && t \in[T, +\infty).
			\end{aligned}\right.
			$$
			Since $f_2$ is increasing with respect to the $Y$-anticipation term and $Y_t^{(1)} \geqslant Y_t^{(3)}$, a.e., a.s., then also by Lemma \ref{classic BSDE comparison},
			$$
			Y_t^{(3)} \geqslant Y_t^{(4)},\quad  \text { a.e., a.s. }
			$$
			For $n=5,6, \cdots$, define the Picard sequence through the following classical BSDEs:
			$$
			\left\{\begin{aligned}
				Y_t^{(n)} = & \xi_T^{(2)} + \int_t^T f_2\left(s, \{Y_r^{(n-1)}\}_{r \in [s,+\infty)}, Z_s^{(n)}\right) d s - \int_t^T Z_s^{(n)} d W_s, && t \in[0, T]; \\
				Y_t^{(n)} = &\xi_t^{(2)}, && t \in[T, +\infty).
			\end{aligned}\right.
			$$
			Similarly, we have $Y_t^{(4)} \geqslant Y_t^{(5)} \geqslant \cdots \geqslant  Y_t^{(n)} \geqslant \cdots$ a.e., a.s.
			
			The Picard sequence defined above is essentially identical to (\ref{picard}), then without loss of generality, we use the same denotation $\left\{u^n(\cdot), v^n(\cdot)\right\}$ as in the proof the Theorem \ref{ex and uni} to show that: $\left\{u^n(t),v^n(t);0 \leqslant t \leqslant T\right\}_{n \geqslant 5}$ converges and $\left\{\left(Y_{.}^n, Z_{.}^n\right)\right\}_{n \geqslant 5}$ is a Cauchy sequence in Banach space $\mathcal{S}_\mathcal{F}^2\left(0,+\infty\right) \times$ $\mathcal{M}_\mathcal{F}^2\left(0,T ; \mathbb{R}^{m}\right)$.
			Denote the limit of $\left\{Y_{.}^n, Z_{.}^n\right\}$ by $\left\{Y_{.}, Z_{.}\right\} \in \mathcal{S}_\mathcal{F}^2\left(0,+\infty \right) \times$ $\mathcal{M}_\mathcal{F}^2\left(0,T ; \mathbb{R}^{m}\right)$. Note that for all $t\in[0,T]$,
			$$
			\begin{aligned}
				E&\left[\int_t^T\left|f_2\left(s, \{Y_r^{(n-1)}\}_{r \in [s,+\infty)}, Z_s^{(n)}\right)-f_2\left(s, \{Y_r\}_{r \in [s,+\infty)}, Z_s\right)\right|^2 e^{\beta s} d s\right] \\
				& \quad \leqslant L E\left[\int_t^T\left(E^{\mathcal{F}_s}\left[\|Y_s^{(n-1)}-Y_s
				\|\right]^2 +\left|Z_s^{(n)}-Z_s\right|^2\right) e^{\beta s} d s\right]\\
				& \quad \leqslant TL E\left[\sup_{t \leqslant s \leqslant T}\left|Y_s^{(n-1)}-Y_s\right|^2 e^{\beta s} \right]
				+ LE\left[\int_t^T \left|Z_s^{(n)}-Z_s\right|^2 e^{\beta s} d s\right] \rightarrow 0
			\end{aligned}
			$$
			when $n \rightarrow \infty$. Therefore, $\left(Y_{.}, Z_{.}\right)$ satisfies the following IABSDE:
			$$
			\left\{\begin{aligned}
				Y_t & =\xi_T^{(2)} + \int_t^{T} f_{2}\left(s, \{Y_r\}_{r \in [s,+\infty)}, Z_s \right) d s- \int_t^{T} Z_s d W_s, & & t \in[0, T]; \\
				Y_t & =\xi_t^{(2)}, && t\in [T, +\infty).
			\end{aligned}\right.
			$$
			By Theorem \ref{ex and uni}, we have
			$$
			Y_t=Y_t^{(2)}, \quad \text { a.e., a.s. }
			$$
			Therefore
			$$
			Y_t^{(1)} \geqslant Y_t^{(3)} \geqslant Y_t^{(4)} \geqslant  Y_t = Y_t^{(2)}, \quad \text { a.e., a.s. }
			$$
		\end{proof}
		
		By imposing a strict monotonicity condition on the generator, we can further obtain a strict comparison theorem.
		\begin{theorem}\label{strict comparision}
			Under the assumptions in Theorem \ref{comparision}, if $\xi_s^{(1)} \geqslant \xi_s^{(2)}, s \in[T, +\infty)$, and \\ $f_1\left(t, \{Y_r^{(1)}\}_{r \in [t,+\infty)}, Z_t^{(1)}\right) \geqslant f_2\left(t, \{Y_r^{(1)}\}_{r \in [t,+\infty)}, Z_t^{(1)}\right), t \in[0, T]$, then
			$$
			Y_t^{(1)} \geqslant Y_t^{(2)}, \quad \text { a.e., a.s. }
			$$
			We also have a strict comparison result: given the assumptions of Theorem \ref{comparision}, and suppose for all $t \in[0, T], z \in \mathbb{R}^m, f_2(t, \cdot, z)$ is strictly increasing. Then the following equivalence holds:
			$$
			Y_0^{(1)}=Y_0^{(2)} \Longleftrightarrow \left\{\begin{aligned}&f_1\left(t,  \{Y_r^{(1)}\}_{r \in [t,+\infty)}, Z_t^{(1)}\right)  =f_2\left(t, \{Y_r^{(1)}\}_{r \in [t,+\infty)}, Z_t^{(1)}\right), &&t \in[0, T];\\ &\xi_t^{(1)}=\xi_t^{(2)},   &&t \in[T, +\infty) .\end{aligned}\right.
			$$
		\end{theorem}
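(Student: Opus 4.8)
The plan is to follow the proof of Theorem~\ref{comparision} almost verbatim, replacing the weak comparison Lemma~\ref{classic BSDE comparison} by the strict comparison Lemma~\ref{classic strict BSDE comparison} at the one step where the hypotheses of the two theorems differ, and then to extract equalities out of the chain of inequalities that proof produces. Throughout, the device is the same as in Theorem~\ref{comparision}: since the trajectory $Y^{(1)}$ is already known, the maps $\bar g_j(s,z):=f_j\big(s,\{Y^{(1)}_r\}_{r\in[s,+\infty)},z\big)$, $j=1,2$, are ordinary generators of classical BSDEs, and $(Y^{(1)}_\cdot,Z^{(1)}_\cdot)$, $(Y^{(3)}_\cdot,Z^{(3)}_\cdot)$ solve the classical BSDEs driven by $(\bar g_1,\xi^{(1)}_T)$ and $(\bar g_2,\xi^{(2)}_T)$ respectively, where $(Y^{(3)}_\cdot,Z^{(3)}_\cdot)$ is the first term of the auxiliary Picard sequence used in the proof of Theorem~\ref{comparision}.

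For the first assertion, I keep that Picard sequence $(Y^{(n)}_\cdot,Z^{(n)}_\cdot)_{n\geq 3}$ unchanged: $(Y^{(3)}_\cdot,Z^{(3)}_\cdot)$ solves the classical BSDE with terminal value $\xi^{(2)}_T$ and generator $(s,z)\mapsto f_2\big(s,\{Y^{(1)}_r\}_{r\in[s,+\infty)},z\big)$, and for $n\geq 4$ one freezes $Y^{(n-1)}$ in the anticipation slot. The hypothesis $f_1\big(t,\{Y^{(1)}_r\}_{r\in[t,+\infty)},Z^{(1)}_t\big)\geq f_2\big(t,\{Y^{(1)}_r\}_{r\in[t,+\infty)},Z^{(1)}_t\big)$ is precisely $\bar g_1(t,Z^{(1)}_t)\geq \bar g_2(t,Z^{(1)}_t)$ a.e., a.s., so together with $\xi^{(1)}_T\geq\xi^{(2)}_T$ the comparison part of Lemma~\ref{classic strict BSDE comparison} gives $Y^{(1)}_t\geq Y^{(3)}_t$; this is the only place where the weaker hypothesis is used. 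From there the monotonicity of $f_2$ in the anticipation variable propagates $Y^{(3)}_\cdot\geq Y^{(4)}_\cdot\geq\cdots$ exactly as in Theorem~\ref{comparision}, the Cauchy estimate from the proof of Theorem~\ref{ex and uni} yields a limit solving the IABSDE with generator $f_2$ and terminal datum $\xi^{(2)}$, and uniqueness identifies this limit with $(Y^{(2)}_\cdot,Z^{(2)}_\cdot)$; hence $Y^{(1)}_t\geq Y^{(3)}_t\geq\cdots\geq Y^{(2)}_t$, a.e., a.s.

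For the equivalence, ``$\Leftarrow$'' is immediate: if $\xi^{(1)}_t=\xi^{(2)}_t$ on $[T,+\infty)$ and $f_1\big(t,\{Y^{(1)}_r\},Z^{(1)}_t\big)=f_2\big(t,\{Y^{(1)}_r\},Z^{(1)}_t\big)$ on $[0,T]$, substituting these two equalities into the equation satisfied by $(Y^{(1)}_\cdot,Z^{(1)}_\cdot)$ shows that $(Y^{(1)}_\cdot,Z^{(1)}_\cdot)$ is itself a solution of the IABSDE that defines $(Y^{(2)}_\cdot,Z^{(2)}_\cdot)$, so Theorem~\ref{ex and uni} gives $(Y^{(1)}_\cdot,Z^{(1)}_\cdot)=(Y^{(2)}_\cdot,Z^{(2)}_\cdot)$, in particular $Y^{(1)}_0=Y^{(2)}_0$. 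For ``$\Rightarrow$'', assume $Y^{(1)}_0=Y^{(2)}_0$. The chain of the previous paragraph gives $Y^{(1)}_0\geq Y^{(3)}_0\geq Y^{(4)}_0\geq\cdots\geq Y^{(2)}_0=Y^{(1)}_0$, so all of these coincide; applying the strict part of Lemma~\ref{classic strict BSDE comparison} to $(\bar g_1,\xi^{(1)}_T)$ versus $(\bar g_2,\xi^{(2)}_T)$ and using $Y^{(1)}_0=Y^{(3)}_0$ yields $\xi^{(1)}_T=\xi^{(2)}_T$ a.s.\ and $f_1\big(s,\{Y^{(1)}_r\},Z^{(1)}_s\big)=f_2\big(s,\{Y^{(1)}_r\},Z^{(1)}_s\big)$ a.e., a.s., which is the $[0,T]$-part of the claimed right-hand side.

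It remains to upgrade $\xi^{(1)}=\xi^{(2)}$ from $t=T$ to the whole ray $[T,+\infty)$, and this is the step I expect to be the main obstacle, since on $[0,T]$ the various solutions already agree, so the information extracted there is vacuous on the terminal ray. Here I would invoke $Y^{(3)}_0=Y^{(4)}_0$ together with the pointwise order $Y^{(1)}_r\geq Y^{(3)}_r$ for all $r\in[0,+\infty)$ (on $[0,T]$ from the first assertion, on $[T,+\infty)$ from $\xi^{(1)}\geq\xi^{(2)}$): the classical generators $f_2\big(s,\{Y^{(1)}_r\},\cdot\big)$ and $f_2\big(s,\{Y^{(3)}_r\},\cdot\big)$ are then ordered and share the terminal value $\xi^{(2)}_T$, so the strict part of Lemma~\ref{classic strict BSDE comparison} forces $f_2\big(s,\{Y^{(1)}_r\}_{r\in[s,+\infty)},Z^{(3)}_s\big)=f_2\big(s,\{Y^{(3)}_r\}_{r\in[s,+\infty)},Z^{(3)}_s\big)$ a.e., a.s.; the strict monotonicity of $f_2(s,\cdot,z)$ then yields $Y^{(1)}_r=Y^{(3)}_r$ for a.e.\ $r\in[0,+\infty)$, a.s., and restricting to $r\geq T$, where $Y^{(1)}=\xi^{(1)}$ and $Y^{(3)}=\xi^{(2)}$, together with the continuity of $\xi^{(1)},\xi^{(2)}\in\mathcal{S}_{\mathcal{F}}^2(T,+\infty)$, gives $\xi^{(1)}_t=\xi^{(2)}_t$ for all $t\in[T,+\infty)$, a.s. The care needed is in phrasing ``strictly increasing'' so that an a.e., a.s.\ equality of generator values is genuinely equivalent to an a.e., a.s.\ equality of the anticipation arguments on $[T,+\infty)$. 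A minor point, already tacit in the proof of Theorem~\ref{comparision}, is that freezing $Y^{(1)}$ or $Y^{(3)}$ produces classical generators meeting conditions (a)--(b); for the $Z$-variable this amounts to reading (H1) pointwise, which is harmless here because in this section $f$ depends on $Z$ only through its current value.
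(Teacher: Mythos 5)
Your proposal is correct and follows the same architecture as the paper's proof: freeze $Y^{(1)}$ in the anticipation slot to reduce to classical BSDEs, obtain $Y^{(1)}\geqslant Y^{(3)}$, run the monotone Picard scheme from Theorem~\ref{comparision} down to $Y^{(2)}$, and in the equivalence use the strict part of Lemma~\ref{classic strict BSDE comparison} at time $0$ followed by strict monotonicity of $f_2$ to propagate equality to the whole future trajectory and hence to $\xi^{(1)}=\xi^{(2)}$ on $[T,+\infty)$. The deviations are minor re-routings rather than a different method. For $Y^{(1)}\geqslant Y^{(3)}$ the paper does not cite Lemma~\ref{classic strict BSDE comparison} but re-derives the comparison by an explicit linearization in $z$ (the bounded coefficient $b_s$ and the exponential $X$), which produces the closed formula $\tilde Y_t=E^{\mathcal{F}_t}\left[\tilde\xi_T X_T+\int_t^T \tilde f_s X_s\, ds\right]$; this formula is then recycled in the ``$\Longleftarrow$'' direction, whereas you obtain ``$\Longleftarrow$'' more directly by noting that $(Y^{(1)}_\cdot,Z^{(1)}_\cdot)$ itself solves the $f_2$-IABSDE and invoking Theorem~\ref{ex and uni} --- both are valid. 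For ``$\Longrightarrow$'' the paper applies the strict part of Lemma~\ref{classic strict BSDE comparison} twice, to the pairs $(Y^{(1)},Y^{(3)})$ and $(Y^{(1)},Y^{(2)})$ (checking the hypothesis for the latter via monotonicity of $f_2$ and $Y^{(1)}\geqslant Y^{(2)}$), combines the two generator equalities into $f_2(t,\{Y^{(1)}_r\},Z^{(1)}_t)=f_2(t,\{Y^{(2)}_r\},Z^{(1)}_t)$, and concludes $Y^{(1)}=Y^{(2)}$ on $[0,+\infty)$ by strict monotonicity; you instead squeeze $Y^{(3)}_0=Y^{(4)}_0$ out of the chain and apply the strict part to the pair $(Y^{(3)},Y^{(4)})$, concluding $Y^{(1)}=Y^{(3)}$ on the future ray --- the same two ingredients applied to a neighbouring pair, and equally conclusive on $[T,+\infty)$. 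Both your argument and the paper's rest on the two points you flag: the pointwise-in-$z$ reading of (H1) when the anticipation argument is frozen (the paper uses it too, e.g.\ in asserting $|b_s|\leqslant L$) and the informal formulation of ``strictly increasing''; so no gap is introduced beyond what the paper itself tolerates.
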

		\begin{proof}
			In the first step, set
			$$
			\left\{\begin{aligned}
				Y_t^{(3)} = & \xi_T^{(2)}+\int_t^T f_2\left(s,\{Y_r^{(1)}\}_{r \in [s,+\infty)}, Z_s^{(3)}\right) d s-\int_t^T Z_s^{(3)} d W_s, \quad &&t\in[0, T]; \\
				Y_t^{(3)} = & \xi_t^{(2)}, \quad &&t\in[T, +\infty).
			\end{aligned}\right.
			$$
			Set $\tilde{f_t}=f_1\left(t, \{Y_r^{(1)}\}_{r \in [t,+\infty)}, Z_t^{(1)}\right)-f_2\left(t, \{Y_r^{(1)}\}_{r \in [t,+\infty)}, Z_t^{(1)}\right) \geqslant 0$, $\tilde{\xi}_{.}=\xi_{.}^{(1)}-\xi_{.}^{(2)} \geqslant 0$ a.e., a.s. and $\left(\tilde{Y}_{.},\tilde{Z}_{.}\right) = \left(Y_{.}^{(1)}- Y_{.}^{(3)}, Z_{.}^{(1)}-Z_{.}^{(3)}\right)$. Then the pair $\left(\tilde{Y}_{.},\tilde{Z}_{.}\right)$ is the solution of the linear BSDE:
			$$
			\left\{\begin{aligned}
				\tilde{Y}_t&=\tilde{\xi}_T+\int_t^T\left(b_s \tilde{Z}_s+\tilde{f}_s\right) d s-\int_t^T \tilde{Z}_s d W_s, \quad && t \in[0, T] ; \\
				\tilde{Y}_t&=\tilde{\xi}_t, \quad &&t \in[T, +\infty),
			\end{aligned}\right.
			$$
			where
			$$
			\begin{aligned}
				b_s= \left\{\begin{aligned}&\frac{f_2\left(s, \{Y_r^{(1)}\}_{r \in [s,+\infty)}, Z_s^{(1)}\right)-f_2\left(s, \{Y_r^{(1)}\}_{r \in [s,+\infty)}, Z_s^{(3)}\right)}{Z_s^{(1)}-Z_s^{(3)}}, \quad&& \text { if } Z_s^{(1)} \neq Z_s^{(3)}; \\
					&0,  \quad&&\text { if } Z_s^{(1)}=Z_s^{(3)}.\end{aligned}\right.
			\end{aligned}
			$$
			Since $f_2$ satisfies (H1), we get $\left|b_s\right| \leqslant L$. Set
			$$
			X_t:=\exp \left[\int_0^t b_s d W_s-\frac{1}{2} \int_0^t\left|b_s\right|^2 d s\right] \geqslant 0 .
			$$
			Applying It\^{o}'s formula to $X_s \tilde{Y}_s$ for $s\in[t, T]$ and take conditional expectations under $\mathcal{F}_t$ on both sides, we can show that $\tilde{Y}_t$ can be expressed by the closed formula:
			$$
			\tilde{Y}_t=E^{\mathcal{F}_t}\left[\tilde{\xi}_T X_T+\int_t^T \tilde{f}_s X_s d s\right]\geqslant 0
			$$
			that is $Y_t^{(1)} \geqslant Y_t^{(3)}$, a.e., a.s.
			Then, similar to the proof of Theorem \ref{comparision}, we obtain
			$$
			Y_t^{(1)} \geqslant Y_t^{(2)}, \quad \text { a.e., a.s. }
			$$
			\\
			In the second step, we need to prove the strict comparison theorem.\\
			$(\Longrightarrow)$ Suppose $Y_0^{(1)}=Y_0^{(2)}$, then by Lemma \ref{classic strict BSDE comparison}, we get
			$$
			f_1\left(t, \{Y_r^{(1)}\}_{r \in [s,+\infty)}, Z_t^{(1)} \right)=f_2\left(t, \{Y_r^{(2)}\}_{r \in [s,+\infty)}, Z_t^{(1)}\right), \quad t \in[0, T] .
			$$
			From the first step we have that $Y_0^{(1)} \geqslant Y_0^{(3)} \geqslant Y_0^{(2)}$, thus $Y_0^{(1)}=Y_0^{(3)}$. Also by Lemma \ref{classic strict BSDE comparison}, we obtain
			$$
			f_1\left(t, \{Y_r^{(1)}\}_{r \in [s,+\infty)}, Z_t^{(1)} \right)=f_2\left(t, \{Y_r^{(1)}\}_{r \in [s,+\infty)}, Z_t^{(1)} \right), \quad t \in[0, T] .
			$$
			Therefore
			$$
			f_2\left(t, \{Y_r^{(1)}\}_{r \in [s,+\infty)}, Z_t^{(1)} \right)=f_2\left(t, \{Y_r^{(2)}\}_{r \in [s,+\infty)}, Z_t^{(1)}\right), \quad t \in[0, T] .
			$$
			Since $f_2$ is strictly increasing in the anticipation term of $Y$, we get $Y_t^{(1)}=Y_t^{(2)}, t\in[0,+\infty)$. In particular, $\xi_t^{(1)}=\xi_t^{(2)}, t\in[T,+\infty)$.
			\\
			$(\Longleftarrow)$  Suppose $f_2\left(t, \{Y_r^{(1)}\}_{r \in [s,+\infty)}, Z_t^{(1)} \right)=f_2\left(t, \{Y_r^{(2)}\}_{r \in [s,+\infty)}, Z_t^{(1)}\right), t \in[0, T]$
			and $\xi_t^{(1)}=\xi_t^{(2)},  t\in[T, +\infty)$. Then
			$$
			\tilde{Y}_t=Y_t^{(1)}-Y_t^{(3)}=E^{\mathcal{F}_t}\left[\tilde{\xi}_T X_T+\int_t^T \tilde{f}_s X_s d s\right] \equiv 0 .
			$$
			Then $Y_{.}^{(1)}$ satisfies the following IABSDE:
			$$
			\left\{\begin{aligned}
				Y_t^{(1)} = & \xi_T^{(2)}+\int_t^T f_2\left(s,\{Y_r^{(1)}\}_{r \in [s,+\infty)}, Z_s^{(3)}\right) d s-\int_t^T Z_s^{(3)} d W_s, \quad &&t\in[0, T]; \\
				Y_t^{(1)} = & \xi_t^{(2)}, \quad &&t\in[T, +\infty).
			\end{aligned}\right.
			$$
			By Theorem \ref{ex and uni}, $Y_t^{(1)}=Y_t^{(2)}$, a.e., a.s., in particular, $Y_0^{(1)}=Y_0^{(2)}$.
		\end{proof}
		\\
		\section{Stochastic Control Problem}\label{chapter Stochastic control problem}
		El Karoui, Peng and Quenez \cite{BSDEinfinance} showed a perfect duality between SDEs and classical BSDEs then applied the duality to stochastic control problems.
		In this section, we  establish a duality between SDEs with infinite delay (ISDDEs for short) and IABSDEs in Section \ref{chapter duality}, then we use the duality to solve a stochastic control problem governed by ISDDEs in Section \ref{chapter stochastic control problem}.

		\subsection{The Duality Property}\label{chapter duality}
		For BSDEs depend on finite anticipation, Peng and Yang \cite{peng09AP} introduced a duality between delayed SDEs and anticipated BSDEs, then Yang and Elliot \cite{yangzhe13ECP} showed that the duality also holds between generalized delayed SDEs and generalized anticipated BSDEs.
		For the BSDEs with infinite anticipation,
		consider the following 1-dimensional linear IABSDE:
		
		\begin{equation}\label{linear IBSFDE}
			\left\{\begin{aligned}
				-dY_t & =\left(A_t\left(\{Y_r\}_{r\in [t,+\infty)}\right) + B_t\left(\{Z_r\}_{r\in [t,+\infty)}\right)+l_t\right) d t - Z_t d W_t, & & t \in[0, T]; \\
				Y_t & =Q_t, & & t \in[T, +\infty );\\
				Z_t & =P_t, & & t \in[T, +\infty ),
			\end{aligned}\right.
		\end{equation}
		where $A_t: \mathcal{M}_{\mathcal{F}}^2(t, +\infty) \rightarrow L^2\left(\mathcal{F}_t\right), B_t: \mathcal{M}_{\mathcal{F}}^{2,\beta}\left(t, +\infty ; \mathbb{R}^{1 \times m}\right) \rightarrow L^2\left(\mathcal{F}_t, \mathbb{R}^{1 \times m}\right)$ are defined by
		$$
		A_t\left(\theta\right) = E^{\mathcal{F}_t}\left[\int_{t}^{+\infty}\mu_r \theta_rd r\right], \quad
		B_t\left(\theta^{\prime}\right) = E^{\mathcal{F}_t}\left[\int_{t}^{+\infty}\nu_r \theta^{\prime}_rd r\right],
		$$
		where $\mu, \nu: \mathbb{R} \rightarrow \mathbb{R}$ are two uniformly continuous functions. Assume there exists a constant $C>0$, such that
		\begin{equation}\label{H3}
			\int_{0}^{+\infty}\left|\mu_r\right|dr \leqslant C, \quad \int_{0}^{+\infty}\nu_r^2dr \leqslant C.
		\end{equation}
		
		\begin{proposition}
			Let conditions (\ref{H3}) hold. Then, the generator in (\ref{linear IBSFDE}) satisfies assumption (H1).
		\end{proposition}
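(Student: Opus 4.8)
The plan is to reduce the verification of assumption (H1) to two elementary estimates, one for the $A$-part (the $Y$-anticipation) and one for the $B$-part (the $Z$-anticipation), each exploiting one of the two bounds in (\ref{H3}). Since $A_t$ and $B_t$ are linear and $l_s$ does not depend on the anticipation variables, for $t\in[0,T]$, $y_.,y_.^{\prime}\in\mathcal{M}_{\mathcal{F}}^2(t,+\infty)$ and $z_.,z_.^{\prime}\in\mathcal{M}_{\mathcal{F}}^{2,\beta}(t,+\infty;\mathbb{R}^{1\times m})$, writing $\delta y:=y_.-y_.^{\prime}$ and $\delta z:=z_.-z_.^{\prime}$ one has, for the generator $f$ of (\ref{linear IBSFDE}),
$$
f(s,y_.,z_.)-f(s,y_.^{\prime},z_.^{\prime})=E^{\mathcal{F}_s}\Bigl[\int_s^{+\infty}\mu_r\,\delta y_r\,dr\Bigr]+E^{\mathcal{F}_s}\Bigl[\int_s^{+\infty}\nu_r\,\delta z_r\,dr\Bigr],
$$
so that $(a+b)^2\leqslant 2a^2+2b^2$ splits $|f(s,y_.,z_.)-f(s,y_.^{\prime},z_.^{\prime})|^2$ into an $A$-term and a $B$-term, handled separately. (That $f$ itself maps into $L^2(\mathcal{F}_s;\mathbb{R})$ follows along the same lines once one notes that a uniformly continuous integrable function on $[0,+\infty)$ vanishes at infinity and is therefore bounded; since only (H1) is asserted I would dispose of this in one line.)

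For the $A$-term, conditional Jensen gives $|A_s(\delta y)|\leqslant E^{\mathcal{F}_s}\bigl[\int_s^{+\infty}|\mu_r|\,|\delta y_r|\,dr\bigr]$; bounding $|\delta y_r|\leqslant\|\delta y_s\|$ for every $r\geqslant s$ and pulling out the deterministic factor $\int_s^{+\infty}|\mu_r|\,dr\leqslant\int_0^{+\infty}|\mu_r|\,dr\leqslant C$ yields $|A_s(\delta y)|\leqslant C\,E^{\mathcal{F}_s}[\|\delta y_s\|]$. Squaring, applying Jensen once more, multiplying by $e^{\beta s}$, integrating over $[t,T]$ and taking expectation (tower property) gives
$$
E\Bigl[\int_t^T e^{\beta s}\bigl|A_s(\delta y)\bigr|^2\,ds\Bigr]\leqslant C^2\,E\Bigl[\int_t^{+\infty}e^{\beta s}\|\delta y_s\|^2\,ds\Bigr],
$$
which is exactly the $Y$-contribution to the right-hand side of (H1).

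For the $B$-term the key device is to feed the exponential weight into a Cauchy--Schwarz split $\nu_r\,\delta z_r=(\nu_r e^{-\beta r/2})(\delta z_r e^{\beta r/2})$; combined with conditional Jensen this gives $\bigl|B_s(\delta z)\bigr|^2\leqslant\bigl(\int_s^{+\infty}\nu_r^2 e^{-\beta r}\,dr\bigr)\,E^{\mathcal{F}_s}\bigl[\int_s^{+\infty}|\delta z_r|^2 e^{\beta r}\,dr\bigr]$, and since $\beta\geqslant 0$ the first factor is at most $e^{-\beta s}\int_0^{+\infty}\nu_r^2\,dr\leqslant C e^{-\beta s}$, so the weight $e^{\beta s}$ on the left of (H1) absorbs it exactly. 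Multiplying by $e^{\beta s}$, integrating in $s$ over $[t,T]$, extending $\int_s^{+\infty}$ to $\int_t^{+\infty}$ (nonnegative integrand, $s\geqslant t$), using $T-t\leqslant T$, and taking expectation gives $E\bigl[\int_t^T e^{\beta s}|B_s(\delta z)|^2\,ds\bigr]\leqslant CT\,E\bigl[\int_t^{+\infty}e^{\beta r}|\delta z_r|^2\,dr\bigr]$. Adding the two pieces yields (H1) with $L=2\max\{C^2,CT\}$ (or any larger constant).

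I do not anticipate a genuine obstacle; the argument is a routine verification. The only points that need care are (i) keeping the conditional expectation in place until the tower property is invoked, since the sup-norm $\|\delta y_s\|$ is not $\mathcal{F}_s$-measurable, and (ii) arranging the Cauchy--Schwarz split in the $B$-term so that the factor $e^{\beta s}$ appearing on the left of (H1) precisely cancels the $e^{-\beta s}$ produced by $\int_s^{+\infty}\nu_r^2 e^{-\beta r}\,dr$ --- which is exactly where the standing hypothesis $\beta\geqslant 0$ enters.
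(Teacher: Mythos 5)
Your proof is correct and follows essentially the same route as the paper: the same split into the $A$- and $B$-parts, the same sup-norm bound with $\int_0^{+\infty}|\mu_r|\,dr\leqslant C$ for the $Y$-anticipation, and the same Cauchy--Schwarz split $\nu_r\,\delta z_r=(\nu_r e^{-\beta r/2})(\delta z_r e^{\beta r/2})$ followed by Fubini/tower for the $Z$-anticipation. The only (minor, favorable) difference is that you keep the factor $e^{-\beta s}$ from $\int_s^{+\infty}\nu_r^2 e^{-\beta r}\,dr\leqslant Ce^{-\beta s}$ so it cancels the weight $e^{\beta s}$ exactly, giving a Lipschitz constant of order $CT$ independent of $\beta$, whereas the paper discards the exponential and ends up with the $\beta$-dependent constant $Ce^{\beta T}/\beta$.
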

		\begin{proof}
			For all $t \in[0, T], Y_{.}, Y_{.}^{\prime} \in \mathcal{M}_\mathcal{F}^2\left(t, +\infty\right)$, it follows that
			$$
			\begin{aligned}
				\left|A_t\left(\{Y_r\}_{r\in [t,+\infty)}\right)
				-A_t\left(\{Y^{\prime}_r\}_{r\in [t,+\infty)}\right)\right|
				&\leqslant E^{\mathcal{F}_t}\left[\int_{t}^{+\infty}\left|\mu_r\right|\cdot \left(\sup_{r \leqslant k < +\infty} \left|Y_k-Y_k^{\prime} \right|\right)dr \right]\\
				&\leqslant C E^{\mathcal{F}_t} \left[\left\|Y_t-Y_t^{\prime} \right\|\right].
			\end{aligned}
			$$
			Thus assumption (H1) holds for $Y$-anticipation term. For $Z$-anticipation term, and any constant $\beta \geqslant 0$, $Z_{.}, Z_{.}^{\prime} \in \mathcal{M}_\mathcal{F}^{2,\beta}\left(t, +\infty; \mathbb{R}^{1 \times m}\right)$,
			$$
			\left|B_s\left(\{Z_r\}_{r\in [s,+\infty)}\right)
			-B_s\left(\{Z^{\prime}_r\}_{r\in [s,+\infty)}\right)\right|^2
			\leqslant \left(\int_{s}^{+\infty}\nu_r^2 e^{-\beta r}dr\right)\cdot E^{\mathcal{F}_s}\left[\int_{s}^{+\infty}\left|Z_r - Z_r^{\prime}\right|^2 e^{\beta r} dr \right].
			$$
			By Fubini's Theorem, we have
			$$
			\begin{aligned}
				E&\left[\int_{t}^{T} \left|B_s\left(\{Z_r\}_{r\in [s,+\infty)}\right)
				-B_s\left(\{Z^{\prime}_r\}_{r\in [s,+\infty)}\right)\right|^2 e^{\beta s} ds\right]\\
				&\leqslant \left(\int_{0}^{+\infty}\nu_r^2dr\right)\cdot E\left[ \int_{t}^{T} \left(\int_{s}^{+\infty}\left|Z_r - Z_r^{\prime}\right|^2 e^{\beta r} dr\right)e^{\beta s}ds\right]\\
				&\leqslant \frac{Ce^{\beta T}}{\beta}  \cdot E\left[\int_{t}^{+\infty}\left|Z_r - Z_r^{\prime}\right|^2 e^{\beta r}dr \right].
			\end{aligned}
			$$
		\end{proof}
		
		\begin{lemma} \label{duality lemma}
			Suppose $l . \in L_{\mathcal{F}}^2(0, T)$ and $\mu., \nu.$ satisfy (\ref{H3}). Then for any given terminal condition $Q. \in \mathcal{S}^2_{\mathcal{F}}\left(T, +\infty\right), P. \in \mathcal{M}_{\mathcal{F}}^{2,\beta}\left(T, +\infty; \mathbb{R}^{1 \times m}\right)$, the solution $Y.$ of the IABSDE (\ref{linear IBSFDE}) achieves the following closed formula:
			$$
			\begin{aligned}
				Y_t= & E^{\mathcal{F}_t}\left[X_T Q_T+\int_t^T X_s l_s  d s\right] \\
				& +E^{\mathcal{F}_t}\left[\left(\int_T^{+\infty}\mu_r Q_r  d r\right)\left(\int_t^T X_s d s\right)
				+ \left(\int_T^{+\infty}\nu_r P_r  d r\right)\left(\int_t^T X_s d s\right)\right] \quad \text { a.e., a.s. }
			\end{aligned}
			$$
			where $\left\{X_s\right\}_{s \in (-\infty, T]}$ is the solution of the following linear ISDDE:
			$$
			\left\{\begin{aligned}
				d X_s & =\mu_s \left(\int_{-\infty}^{s}X_rdr\right) ds+\nu_s\left(\int_{-\infty}^{s} X_rdr\right)dW_s, & & s \in[t, T]; \\
				X_t & =1, & & \\
				X_s & =0, & & s \in(-\infty, t).
			\end{aligned}\right.
			$$
		\end{lemma}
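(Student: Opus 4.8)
The plan is to obtain the closed formula through an It\^{o} product-rule computation coupling the forward process $X$ with the backward pair $(Y.,Z.)$, using the tower property to unfold the conditional expectations that define $A_s$ and $B_s$, and Fubini's theorem to interchange the order of integration in the resulting double time-integrals.

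First I would record the well-posedness of the two equations. Since $X_s=0$ for $s<t$, the memory term $\int_{-\infty}^{s}X_r\,dr$ collapses to $\int_t^{s}X_r\,dr=:\Xi_s$, so the \emph{linear} system for the forward process is $dX_s=\mu_s\Xi_s\,ds+\nu_s\Xi_s\,dW_s$, $d\Xi_s=X_s\,ds$ on the finite interval $[t,T]$, with coefficients bounded there since $\mu,\nu$ are continuous on a compact interval; a Picard iteration together with Gronwall's inequality yields a unique solution with $(X.,\Xi.)\in\mathcal{S}_{\mathcal{F}}^2(t,T)\times\mathcal{S}_{\mathcal{F}}^2(t,T)$, in fact in $\mathcal{S}^p$ for every $p<\infty$. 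On the other side, the generator of (\ref{linear IBSFDE}) satisfies (H1) by the Proposition above, and (H2) holds because $l.\in\mathcal{M}_{\mathcal{F}}^2(0,T)$, so (\ref{linear IBSFDE}) has a unique solution $(Y.,Z.)\in\mathcal{S}_{\mathcal{F}}^2(0,+\infty)\times\mathcal{M}_{\mathcal{F}}^{2,\beta}(0,+\infty;\mathbb{R}^{1\times m})$ by Theorem \ref{ex and uni}.

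Next I would apply It\^{o}'s formula to $X_sY_s$ on $[t,T]$. With the shorthand $A_s:=A_s(\{Y_r\}_{r\in[s,+\infty)})$ and $B_s:=B_s(\{Z_r\}_{r\in[s,+\infty)})$, the product rule gives
\[
d(X_sY_s)=\bigl[-X_s(A_s+B_s+l_s)+\mu_s\Xi_sY_s+\nu_s\Xi_sZ_s\bigr]\,ds+\bigl[X_sZ_s+\nu_s\Xi_sY_s\bigr]\,dW_s.
\]
Integrating over $[t,T]$, inserting $X_t=1$ and $Y_T=Q_T$, and taking $E^{\mathcal{F}_t}$ (the $dW$-integral being a genuine martingale after a routine localization), I arrive at
\[
Y_t=E^{\mathcal{F}_t}\!\Bigl[X_TQ_T+\int_t^TX_sl_s\,ds\Bigr]+E^{\mathcal{F}_t}\!\Bigl[\int_t^TX_s(A_s+B_s)\,ds\Bigr]-E^{\mathcal{F}_t}\!\Bigl[\int_t^T(\mu_s\Xi_sY_s+\nu_s\Xi_sZ_s)\,ds\Bigr].
\]

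The core of the argument is the cancellation in the last two expectations. Since $X_s$ is $\mathcal{F}_s$-measurable, $X_sA_s=E^{\mathcal{F}_s}\bigl[X_s\int_s^{+\infty}\mu_rY_r\,dr\bigr]$, so the tower property gives $E^{\mathcal{F}_t}[\int_t^TX_sA_s\,ds]=E^{\mathcal{F}_t}\bigl[\int_t^T\!\int_s^{+\infty}X_s\mu_rY_r\,dr\,ds\bigr]$; splitting $\int_s^{+\infty}=\int_s^{T}+\int_T^{+\infty}$ and using Fubini, the first piece equals $E^{\mathcal{F}_t}[\int_t^T\mu_rY_r\Xi_r\,dr]$, which cancels the $\mu_s\Xi_sY_s$ term, while the second piece equals $E^{\mathcal{F}_t}\bigl[(\int_t^TX_s\,ds)(\int_T^{+\infty}\mu_rQ_r\,dr)\bigr]$ because $Y_r=Q_r$ for $r\geqslant T$. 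The identical manipulation applied to $B_s$ cancels the $\nu_s\Xi_sZ_s$ term and produces $E^{\mathcal{F}_t}\bigl[(\int_t^TX_s\,ds)(\int_T^{+\infty}\nu_rP_r\,dr)\bigr]$ via $Z_r=P_r$ for $r\geqslant T$; collecting the four surviving terms is exactly the asserted formula. I expect the main obstacle to be the integrability bookkeeping legitimizing the martingale property of the $dW$-integral and the Fubini interchanges over the unbounded horizon: these rest on $X\in\mathcal{S}^p$, $Z,P\in\mathcal{M}_{\mathcal{F}}^{2,\beta}$, $Q\in\mathcal{S}_{\mathcal{F}}^2$, and on (\ref{H3}), which makes $\int_T^{+\infty}|\mu_r|\,dr$ finite and, by Cauchy--Schwarz against the weight $e^{\beta r}$, places $\int_T^{+\infty}\nu_rP_r\,dr$ in $L^2$; each such step must nevertheless be checked with care because the anticipation windows extend to $+\infty$.
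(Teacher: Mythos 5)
Your proposal is correct and follows essentially the same route as the paper: It\^{o}'s formula applied to $X_sY_s$ on $[t,T]$, conditional expectation under $\mathcal{F}_t$, and Fubini's theorem to split the anticipation integrals at $T$ so that the $\int_t^{s}X_r\,dr$ delay terms cancel the $[s,T]$ portions and the $[T,+\infty)$ portions produce the $Q$ and $P$ terms via the terminal conditions. The only cosmetic differences are that you establish well-posedness of the forward ISDDE directly via a Picard--Gronwall argument on $(X,\Xi)$ where the paper cites an existing result, and you make explicit the tower-property and integrability bookkeeping that the paper leaves implicit.
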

		\begin{proof}
			By Theorem 3.1 in \cite{07JMAA}, the above ISDDE has a unique solution $X_{.}\in \mathcal{M}_{\mathcal{F}}^2\left(-\infty, T\right)$. Applying It\^{o}'s formula to $X_s Y_s$ for $s \in[t, T]$, and taking the conditional expectation under $\mathcal{F}_t$, then by Fubini's Theorem, we have
			$$
			\begin{aligned}
				E^{\mathcal{F}_t}&\left[X_T Y_T\right]-X_t Y_t \\
				= & E^{\mathcal{F}_t}\left[-\int_t^T X_s\left(\int_s^{+\infty} \mu_r Y_r d r\right) d s-\int_t^T X_s \left(\int_s^{+\infty} \nu_r Z_r d r\right) d s-\int_t^T X_s l_s d s\right] \\
				& +E^{\mathcal{F}_t}\left[\int_t^T \mu_sY_s \left(\int_{-\infty}^s X_r d r\right) d s+\int_t^T \nu_s Z_s \left(\int_{-\infty}^s X_r d r\right) d s\right] \\
				= & E^{\mathcal{F}_t}\left[-\int_T^{+\infty} \mu_r Y_r \left(\int_t^T X_s d s\right) d r-\int_t^T \mu_r Y_r \left(\int_t^r X_s d s\right) d r\right] \\
				& +E^{\mathcal{F}_t}\left[-\int_T^{+\infty} \nu_r Z_r \left(\int_t^T X_s d s\right) d r-\int_t^T \nu_r Z_r \left(\int_t^r X_s d s\right) d r-\int_t^T X_s l_s d s\right] \\
				& +E^{\mathcal{F}_t}\left[\int_t^T \mu_sY_s \left(\int_{-\infty}^s X_r d r\right) d s
				+ \int_t^T \nu_s Z_s \left(\int_{-\infty}^s X_r d r\right) d s\right] \\
				= & E^{\mathcal{F}_t}\left[-\int_T^{+\infty} \mu_r Y_r \left(\int_t^T X_s d s\right) d r-\int_T^{+\infty} \nu_r Z_r \left(\int_t^T X_s d s\right) d r -\int_t^T X_s l_s d s \right]\\
				&+ E^{\mathcal{F}_t}\left[\int_t^T \mu_s Y_s \left(\int_{-\infty}^t X_r d r\right) d s
				+\int_t^T \nu_s Z_s \left(\int_{-\infty}^t X_r d r\right) d s\right].
			\end{aligned}
			$$
			Since $X_t=1$ and $X_s=0$ for $ s \in(-\infty, t)$, we can obtain that
			$$
			\begin{aligned}
				Y_t= & E^{\mathcal{F}_t}\left[X_T Q_T+\int_t^T X_s l_s d s\right] \\
				& +E^{\mathcal{F}_t}\left[\left(\int_T^{+\infty}\mu_r Q_r  d r\right)\left(\int_t^T X_s d s\right)
				+ \left(\int_T^{+\infty}\nu_r P_r  d r\right)\left(\int_t^T X_s d s\right)\right]. \\
			\end{aligned}
			$$
		\end{proof}
		
		\subsection{Optimal Control}\label{chapter stochastic control problem}
		Now, we consider the stochastic control problem governed by the following ISDDEs with control:
		\begin{equation}\label{ISDDE}
			\left\{\begin{aligned}
				d X_s^u & =\mu\left(s, u_s\right) \left(\int_{-\infty}^{s}X_r^udr\right) ds+\sigma\left(s,u_s\right)X_s^udW_s, & & s \in[t, T]; \\
				X_t & =1, & & \\
				X_s & =0, & & s \in(-\infty, t),
			\end{aligned}\right.
		\end{equation}
		where $\mu\left(s, u\right): \mathbb{R} \times \mathbb{R}^k \longrightarrow \mathbb{R}^{+}$and $\sigma\left(s, u\right): \mathbb{R} \times \mathbb{R}^k \longrightarrow \mathbb{R}^{1 \times m}$ are adapted processes uniformly continuous with respect to $(s, u)$.
		
		A feasible control $\left(u_s, s \in[0, +\infty)\right)$ is a continuous adapted process valued in a compact subset $U$ in $\mathbb{R}^k$, and denote the set of feasible controls by $\mathcal{U}$.
		Our objective is to maximize the following objective function over all feasible controls $u$:
		\begin{equation}\label{cost functional}
			\begin{aligned}
				J(u)= E\left[X_T^u Q(T)+\int_0^T X_s^u l\left(s, u_s\right)  d s\right]
				+E\left[\left(\int_T^{+\infty}\mu\left(s, u_s\right) Q(s)  d s\right)\left(\int_0^T X_s^u d s\right)\right],
			\end{aligned}
		\end{equation}
		where $Q(\cdot) \in \mathcal{S}_{\mathcal{F}}^2(T, +\infty)$ is the terminal condition,
		adapted process $\left(l\left(\omega, s, u_s\right), s \in[0, T]\right)$ is the running cost uniformly continuous with respect to $(s, u)$.
		
		{ In the objective function (\ref{cost functional}), we introduce a novel term that couples past actions $\left(\int_0^T X_s^u d s\right)$ with future effect $\left(\int_T^{+\infty} \mu Q d s\right)$. This structure arises naturally in non-instant transmission phenomena, e.g. climate economics or biological applications.
			\begin{remark}
				Our optimization problem (\ref{ISDDE})-(\ref{cost functional}) can be illustrated by a concrete example of carbon emission control model.
				The state process $X_s$ represents the carbon growth, and it is reasonable to assume that $X_s$ depends on the cumulative carbon emission during the past period $\left(\int_{-\infty}^{s}X_r^udr\right)$. The government implements the emission control policy $u_s$, e.g. carbon tax, to optimize the overall carbon damage in the form of (\ref{cost functional}):
				\begin{itemize}
					\item The running cost $l(s,u_s)$ captures the instantaneous production loss due to policy implementation. The terminal cost $Q(T)$ measures the immediate economic cost at time $T$ from irreversible climate impacts (e.g., sea-level rise);
					\item The coupling term $\left(\int_T^{\infty} \mu(s,u_s) Q(s)  d s\right)\left(\int_0^T X_s^u d s\right)$ models the long-term climate feedback: historical cumulative carbon emissions $\left(\int_0^T X_s^u d s\right)$ amplify future damages, scaled by the decay rate $\mu(\cdot)$ (e.g., CO$_2$ emitted today causes heating that decays slowly over centuries).
				\end{itemize}
			\end{remark}
			
		}
		
		Assume that $\int_{0}^{+\infty}\left|\mu(s, u)\right|^2ds$, $|\sigma(s, u)|$ and $l(s, u)$ are uniformly bounded by $C$. Further assume that $|\mu(s, u)|$ is uniformly bounded by a non-negative $h(s)$ where $\int_0^{+\infty} h(s) d s \leqslant C$, then  $\int_{t}^{+\infty}\left|\mu(s, u)\right|ds$ is also bounded by $C$ and uniformly continuous with respect to $(t, u)$.
		
		Then, by Lemma \ref{duality lemma}, $J(u)=Y_0^u$, where $\left(Y_{.}^u, Z_{.}^u\right)$ is the solution of the following linear IABSDE:
		$$
		\left\{\begin{aligned}
			Y_t^u&=Q(T) + \int_{t}^{T}f^u\left(s, \{Y_r^u\}_{r \in [s,+\infty)}, Z_s^u\right) d s - \int_{t}^{T}Z_s^u d W_s, && t \in[0, T] ; \\
			Y_t^u&=Q(t), && t \in[T, +\infty),
		\end{aligned}\right.
		$$
		where for all $t\in[0,T], Y. \in \mathcal{S}_{\mathcal{F}}^2\left(t, +\infty \right), z\in \mathbb{R}^m$
		$$
		f^u\left(t,Y.,z\right)=E^{\mathcal{F}_t}\left[\int_{t}^{+\infty}\mu\left(r,u_r\right)Y_rdr \right] + \sigma(t,u_t) z+ l\left(t, u_t\right).
		$$
		
		\begin{theorem}\label{control theorem}
			Set $f\left(t, Y_{.}, z\right)=\operatorname{esssup}_{u \in \mathcal{U}}\left\{f^u\left(t, Y_{.}, z\right)\right\}$, for all $t\in[0,T], Y. \in \mathcal{S}_{\mathcal{F}}^2\left(t, +\infty \right)$, $z\in \mathbb{R}^m$. Then the following IABSDE
			\begin{equation}\label{19}
				\left\{\begin{aligned}
					Y_t&=Q(T) + \int_{t}^{T}f\left(s, \{Y_r\}_{r \in [s,+\infty)}, Z_s\right) d s - \int_{t}^{T}Z_s d W_s, && t \in[0, T] ; \\
					Y_t&=Q(t), && t \in[T, +\infty),
				\end{aligned}\right.
			\end{equation}
			has a unique solution $\left(Y_{.},Z_{.}\right)$, and $Y_{.}$ is the value function of the optimal control problem, that is, for all $t\in[0,T]$,
			$$
			Y_t=Y_t^*:=\operatorname{esssup}_{u \in \mathcal{U}}\left\{Y_t^u\right\}.
			$$
		\end{theorem}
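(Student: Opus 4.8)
The plan is to reduce everything to the tools already developed: the existence--uniqueness result (Theorem \ref{ex and uni}), the comparison theorem (Theorem \ref{comparision}), and the priori estimate (Lemma \ref{lemma basic estimate}). First I would check that $f(t,Y_{.},z)=\operatorname{esssup}_{u\in\mathcal{U}}f^{u}(t,Y_{.},z)$ still satisfies (H1) and (H2), so that (\ref{19}) has a unique solution $(Y_{.},Z_{.})$. The key observation is that the assumed bounds $\int_{t}^{+\infty}|\mu(s,u_{s})|\,ds\leqslant C$ and $|\sigma(s,u_{s})|\leqslant C$ are uniform in $u$, so each $f^{u}$ is Lipschitz in the $Y$-anticipation argument and in $z$ with a constant independent of $u$: this is exactly the computation in the proof of the Proposition immediately preceding Lemma \ref{duality lemma}, with $\mu_{r},\nu_{r}$ replaced by $\mu(r,u_{r}),\sigma(r,u_{r})$. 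Then, from $|\operatorname{esssup}_{u}a^{u}-\operatorname{esssup}_{u}b^{u}|\leqslant\operatorname{esssup}_{u}|a^{u}-b^{u}|$, squaring and integrating transfers (H1) to $f$ with the same constant; since $f(s,0,0)=\operatorname{esssup}_{u}l(s,u_{s})$ is bounded by $C$, (H2) is immediate. Theorem \ref{ex and uni} then gives the unique pair $(Y_{.},Z_{.})\in\mathcal{S}_{\mathcal{F}}^{2}(0,+\infty)\times\mathcal{M}_{\mathcal{F}}^{2,\beta}(0,+\infty;\mathbb{R}^{m})$.

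The inequality $Y_{t}\geqslant Y_{t}^{*}$ is a direct consequence of the comparison theorem. Fix $u\in\mathcal{U}$: by construction $f(t,y_{.},z)\geqslant f^{u}(t,y_{.},z)$ for \emph{all} $(y_{.},z)$, the generator $f^{u}(t,\cdot,z)$ is increasing in the $Y$-anticipation argument because $\mu(\cdot,u_{\cdot})\geqslant 0$, and $Y^{u}$ solves the IABSDE with generator $f^{u}$ and the same terminal datum $Q$; hence Theorem \ref{comparision} yields $Y_{t}\geqslant Y_{t}^{u}$ a.e., a.s. Taking the essential supremum over $u$ gives $Y_{t}\geqslant Y_{t}^{*}$.

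For the reverse inequality $Y_{t}\leqslant Y_{t}^{*}$, I would use an $\varepsilon$-optimal control. Fix $\varepsilon>0$; since $(s,\omega,v)\mapsto f^{v}(s,\{Y_{r}(\omega)\}_{r\geqslant s},Z_{s}(\omega))$ is jointly measurable, $\{\mathcal{F}_{s}\}$-adapted, and continuous in $v$ on the compact set $U$, a measurable selection theorem provides a measurable $\{\mathcal{F}_{s}\}$-adapted $U$-valued control $u^{\varepsilon}$ with
$$
f^{u^{\varepsilon}}\big(s,\{Y_{r}\}_{r\geqslant s},Z_{s}\big)\geqslant f\big(s,\{Y_{r}\}_{r\geqslant s},Z_{s}\big)-\varepsilon ,\qquad \text{a.e. }s\in[0,T],\ \text{a.s.}
$$
Set $R_{s}:=f(s,\{Y_{r}\},Z_{s})-f^{u^{\varepsilon}}(s,\{Y_{r}\},Z_{s})$, so $0\leqslant R_{s}\leqslant\varepsilon$. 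Then $(Y-Y^{u^{\varepsilon}},Z-Z^{u^{\varepsilon}})$ solves an IABSDE with vanishing terminal data whose generator, written in the difference variables, is $g(s,y_{.},z)=f^{u^{\varepsilon}}(s,\{y_{r}+Y^{u^{\varepsilon}}_{r}\},z+Z^{u^{\varepsilon}}_{s})-f^{u^{\varepsilon}}(s,\{Y^{u^{\varepsilon}}_{r}\},Z^{u^{\varepsilon}}_{s})+R_{s}$; this $g$ inherits (H1) with the same Lipschitz constant and $|g(s,0,0)|=|R_{s}|\leqslant\varepsilon$. Applying Lemma \ref{lemma basic estimate} to this difference equation (with vanishing $\xi.$ and $\eta.$) gives
$$
E\Big[\sup_{0\leqslant s<+\infty}\big|Y_{s}-Y_{s}^{u^{\varepsilon}}\big|^{2}\Big]\leqslant C_{0}\,E\Big[\int_{0}^{T}|R_{s}|^{2}\,ds\Big]\leqslant C_{0}T\varepsilon^{2}.
$$
Since $0\leqslant(Y_{t}-Y_{t}^{*})^{+}\leqslant(Y_{t}-Y_{t}^{u^{\varepsilon}})^{+}\leqslant|Y_{t}-Y_{t}^{u^{\varepsilon}}|$, we obtain $E\big[((Y_{t}-Y_{t}^{*})^{+})^{2}\big]\leqslant C_{0}T\varepsilon^{2}$, and letting $\varepsilon\to 0$ gives $Y_{t}\leqslant Y_{t}^{*}$ a.s. Combined with the previous step, $Y_{t}=Y_{t}^{*}$, a.e., a.s.

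I expect the last step to be the main obstacle. The delicate points are: producing the $\varepsilon$-optimal control as a genuine element of $\mathcal{U}$ (the measurable selection is only $\{\mathcal{F}_{s}\}$-adapted, whereas $\mathcal{U}$ as defined consists of continuous controls, so one must either regularize the selector or work with the enlarged class of measurable adapted controls), and carrying out the stability estimate for $Y-Y^{u^{\varepsilon}}$ rigorously inside the weighted infinite-horizon spaces $\mathcal{S}_{\mathcal{F}}^{2}$ and $\mathcal{M}_{\mathcal{F}}^{2,\beta}$. Everything else is a direct application of Theorem \ref{ex and uni}, Theorem \ref{comparision}, and Lemma \ref{lemma basic estimate}, together with the identity $J(u)=Y_{0}^{u}$ furnished by Lemma \ref{duality lemma}.
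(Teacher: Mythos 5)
Your proposal is correct and follows essentially the same route as the paper: verify (H1)--(H2) for the esssup generator using the uniform bounds on $\mu$ and $\sigma$, get $Y_t\geqslant Y_t^u$ from Theorem \ref{comparision} since $\mu\geqslant 0$ makes each $f^u$ increasing in the $Y$-anticipation term, and obtain the reverse inequality via a Kuratowski--Ryll-Nardzewski measurable selection of an $\varepsilon$-optimal control followed by a stability estimate for $Y-Y^{u^{\varepsilon}}$. The only (harmless) difference is that you derive that estimate by applying Lemma \ref{lemma basic estimate} to the difference equation with shifted generator and zero terminal data, whereas the paper redoes the It\^{o}--Gronwall computation directly; your flagged concern about the selector being merely measurable and adapted rather than continuous applies equally to the paper's own argument, which also simply invokes the selection theorem.
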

		\begin{proof}
			In the first step, we need to prove the IABSDE (\ref{19}) has a unique solution.\\
			Since $\int_{t}^{+\infty}\left|\mu(s, u)\right|ds$ is uniformly continuous with respect to $u$, then by the Measurable Selection Theorem in Kuratowski and Ryll-Nardzewski \cite{selector}, there exists a measurable sequence $\left\{u^n\right\} \subset \mathcal{U}$ such that
			$$
			\operatorname{esssup}_{u \in \mathcal{U}} \left\{ \int_t^{+\infty}|\mu(s, u)| d s \right\} =\lim _{n \rightarrow+\infty} \int_t^{+\infty}|\mu(s, u_s^n)| d s.
			$$
			Then by Monotone Convergence Theorem,
			$$
			\begin{aligned}
				\operatorname{esssup}_{u \in \mathcal{U}} E^{\mathcal{F}_t}[\int_t^{+\infty}\left|\mu\left(s, u_s\right)\right| d s] &\leqslant \lim _{n \rightarrow+\infty} E^{\mathcal{F}_t}\left[\max_{1 \leqslant k \leqslant n} \int_t^{+\infty}\left|\mu\left(s, u_s^k\right)\right| d s\right]\\
				&= E^{\mathcal{F}_t}\left[\lim _{n \rightarrow+\infty}\int_t^{+\infty}\left|\mu\left(s, u_s^n\right)\right| d s\right]\\
				&=E^{\mathcal{F}_t}\left[\operatorname{esssup}_{u \in \mathcal{U}} \left\{ \int_t^{+\infty}|\mu(s, u)| d s \right\}\right] .
			\end{aligned}
			$$	
			Therefore, for all $t\in[0,T], Y_{.},Y_{.}^{\prime} \in \mathcal{S}_{\mathcal{F}}^2\left(t, +\infty \right)$, $z, z^{\prime}\in \mathbb{R}^m$, we have
			$$
			\begin{aligned}
				\left|f\right.& \left.\left(t, Y_{.}, z\right)- f\left(t, Y_{.}^{\prime}, z^{\prime}\right) \right|\\
				&\leqslant \operatorname{esssup}_{u \in \mathcal{U}}\left\{E^{\mathcal{F}_t}\left[\int_{t}^{+\infty}\left|\mu(s, u_s)\right|ds\right]\right\} \cdot E^{\mathcal{F}_t}\left[\left\|Y_t - Y_t^{\prime} \right\| \right]
				+ \operatorname{esssup}_{u \in \mathcal{U}}\left\{\left|\sigma(t,u_t)\right| \right\}\cdot \left|z - z^{\prime}\right|\\
				&\leqslant E^{\mathcal{F}_t}\left[\operatorname{esssup}_{u \in \mathcal{U}}\left\{\int_{t}^{+\infty}\left|\mu(s, u_s)\right|ds \right\}\right] \cdot E^{\mathcal{F}_t}\left[\left\|Y_t - Y_t^{\prime} \right\| \right]
				+ \operatorname{esssup}_{u \in \mathcal{U}}\left\{\left|\sigma(t,u_t)\right|\right\}\cdot \left|z - z^{\prime}\right|\\
				&\leqslant C\left(E^{\mathcal{F}_t}\left[\left\|Y_t - Y_t^{\prime} \right\| \right] + \left|z - z^{\prime}\right|\right),
			\end{aligned}
			$$
			and
			$$
			E\left[\int_0^T|f(t, 0,0)|^2 d t\right] \leqslant C^2 T.
			$$
			Thus, $f$ satisfies assumptions (H1) and (H2), then by Theorem \ref{ex and uni}, the IABSDE (\ref{19}) has a unique solution $(Y., Z.) \in \mathcal{S}_\mathcal{F}^2\left(0, +\infty\right) \times \mathcal{M}_\mathcal{F}^{2}\left(0, T; \mathbb{R}^{1 \times m}\right)$.\\

			In the second step, we need to prove that $Y_t=Y_t^*$. \\On the one hand, $f(t, Y_{.}, z) \geqslant f^u(t, Y_{.}, z)$ for all $u \in \mathcal{U}$,
			and $ f^u(t, Y_{.}, z)$ is increasing in the $Y$-anticipation term, then by Comparison Theorem \ref{comparision}, we have $Y_t \geqslant Y_t^u$, a.e., a.s. Thus,
			$$Y_t \geqslant Y_t^*, \quad \text { a.e., a.s. }$$
			On the other hand, for all $\varepsilon \textgreater 0$, define the set-valued function $F^{\varepsilon}:[0,T] \times \Omega \rightarrow 2^{U}$:
			$$
			F^{\varepsilon}\left(t, \omega \right)
			= \left\{u \in \mathcal{U}: f\left(t,\left\{Y_r(\omega)\right\}_{r\in[t,+\infty)}, Z_t(\omega)\right) \leqslant f^u\left(t,\left\{Y_r(\omega)\right\}_{r\in[t,+\infty)}, Z_t(\omega)\right) + \varepsilon    \right\}.
			$$
			By the definition of $f$, for all $(t, \omega) \in [0, T) \times \Omega$, $F^{\varepsilon}\left(t, \omega \right)$ is a non-empty closed set. Therefore, also by the Measurable Selection Theorem in Kuratowski and Ryll-Nardzewski \cite{selector}, there exists a $u^{\varepsilon} \in \mathcal{U}$ such that
			$$
			f\left(t,\left\{Y_r\right\}_{r\in[t,+\infty)}, Z_t\right) \leqslant f^{u^{\varepsilon}}\left(t,\left\{Y_r\right\}_{r\in[t,+\infty)}, Z_t\right) + \varepsilon, \quad \text { a.e., a.s. }
			$$
			We denote the solution of the IABSDE corresponding to $\left(f^{u^{\varepsilon}}, Q(\cdot)\right)$ by $\left(Y_{.}^{u^{\varepsilon}}, Z_{.}^{u^{\varepsilon}}\right)$, and denote $\left(\Delta Y_{.}, \Delta Z_{.}\right)=\left(Y_{.}^{u^{\varepsilon}}-Y_{.}, Z_{.}^{u^{\varepsilon}}-Z_{.}\right)$.  Note that $\left(\Delta Y_{.}, \Delta Z_{.}\right)$ satisfies
			$$
			\begin{aligned}
				\Delta Y_t&=\int_t^{T}\left(f^{u^{\varepsilon}}\left(s, \left\{Y_r^{u^{\varepsilon}}\right\}_{r\in[s,+\infty)}, Z_s^{u^{\varepsilon}}\right) - f^{u^{\varepsilon}}\left(s, \left\{Y_r\right\}_{r\in[s,+\infty)}, Z_s\right)+R(s)\right) d s-\int_t^{T} \Delta Z_s d W_s \\
				&=\int_t^{T}\left(E^{\mathcal{F}_s}\left[\int_s^{+\infty} \mu\left(r, u_r^{\varepsilon}\right) \Delta Y_r d r\right]+\sigma\left(s, u_s^{\varepsilon}\right) \Delta Z_s+R(s)\right) d s-\int_t^{T} \Delta Z_s d W_s,
			\end{aligned}
			$$
			where $R(s)=f^{u^{\varepsilon}}\left(s, \left\{Y_r\right\}_{r\in[s,+\infty)}, Z_s\right) - f\left(s, \left\{Y_r\right\}_{r\in[s,+\infty)}, Z_s\right)$ and $\left|R(s)\right|\leqslant \varepsilon$. Applying It\^{o}'s formula to $\left|\Delta Y_s\right|^2$ for $s \in[t, T]$, and taking expectation on both sides, we have
			$$
			\begin{aligned}
				E\left[\left|\Delta Y_t\right|^2 + \int_{t}^{T} \left|\Delta Z_s\right|^2 ds\right]
				\leqslant& 2	E\left[\int_{t}^{T}\left|\Delta Y_s\cdot E^{\mathcal{F}_s}\left[\int_s^{+\infty} \mu\left(r, u_r^{\varepsilon}\right) \Delta Y_r d r\right]\right|ds\right]\\
				&+2	E\left[\int_{t}^{T}\left|\sigma\left(s, u_s^{\varepsilon}\right)\Delta Y_s \Delta Z_s\right|+\left|R(s)\Delta Y_s \right| ds\right]\\
				\leqslant&E\left[\int_{t}^{T}\left|\Delta Y_s\right|^2 +  \left(\int_{s}^{+\infty}|\mu\left(r, u_r^{\varepsilon}\right)|^2dr\right)\left(\int_{s}^{T}\left|\Delta Y_r\right|^2dr\right) ds   \right]\\
				&+E\left[\int_{t}^{T}\left|\sigma\left(s, u_s^{\varepsilon}\right)\Delta Y_s\right|^2 +\left|\Delta Z_s\right|^2 +\left|\Delta Y_s\right|^2 +\left|R(s)\right|^2ds  \right].
			\end{aligned}
			$$
			Then by Fubini's Theorem, one obtains
			$$
			E\left[\left|\Delta Y_t\right|^2\right] \leqslant (CT+C^2+2)	E\left[\int_{t}^{T} \left|\Delta Y_s\right|^2 ds \right] + T\varepsilon^2.
			$$
			By Gronwall's inequality, it follows that
			$$
			E\left[\left|\Delta Y_t\right|^2\right] \leqslant Te^{CT+C^2+2}\varepsilon^2:=\rho^2\varepsilon^2.
			$$
			Since $Y_t^{u^{\varepsilon}} \leqslant Y_t$, a.e., a.s., then
			$$
			Y_t^{u^{\varepsilon}}-Y_t \geqslant -\rho \varepsilon, \quad \text { a.e., a.s. }
			$$
			Letting $\varepsilon \rightarrow 0$, we get $Y_t^{u^{\varepsilon}} \rightarrow Y_t$, a.e., a.s. Thus,
			$$
			Y_t=Y_t^*, \quad \text { a.e., a.s. }
			$$
		\end{proof}
		
		\section{Conclusion}

		In this study, we introduce an Infinite Anticipation Backward Stochastic Differential Equations (IABSDEs), where the  generator depends on the entire future path of the solution.
		
		Based on a weaker Lipschitz condition of the generator, we establish the well-posedness of IABSDEs with a Picard iteration scheme. We also obtain the comparison theorems for 1-dimensional IABSDEs under monotonicity generator, extending the results of finite anticipation case. Furthermore, we investigate a duality relation between linear IABSDEs and ISDDEs, and solve a stochastic optimization problem governed by ISDDE which demonstrates our framework can be applied to address intrinsically long-memory phenomena.
		
		A couple of directions remain to be explored in the future: Apply the duality in Section \ref{chapter duality} to establish Pontryagin-type maximum principles for IABSDE-controlled systems; Developing efficient numerical schemes e.g., Euler discretizations or deep BSDE solvers \cite{futurework-computation}, to overcome the curse of dimensionality inherent in infinite anticipation.

		\bibliography{citation}
		
	\end{document}